\documentclass[11pt,a4paper]{article}

\usepackage[utf8]{inputenc}
\usepackage[T1]{fontenc}
\usepackage{lmodern}
\usepackage{microtype}

\usepackage[margin=2.5cm]{geometry}

\usepackage{amsmath,amssymb,amsthm,mathtools}

\usepackage{graphicx}
\usepackage[dvipsnames]{xcolor}
\usepackage{enumitem}
\usepackage{booktabs}
\usepackage{cite}

\usepackage{hyperref}

\usepackage[capitalise,noabbrev]{cleveref}

\hypersetup{
    colorlinks=true,
    linkcolor=NavyBlue,
    citecolor=NavyBlue,
    urlcolor=NavyBlue,
    pdfauthor={Daniel Debrohim, Diana Sasaki, Patrícia Nunes},
    pdftitle={Connectedness and Enumeration of Reconfiguration Graphs of Cyclically Colored Triangulations}
}

\setlist[itemize]{noitemsep, topsep=4pt, leftmargin=*}
\setlist[enumerate]{noitemsep, topsep=4pt, leftmargin=*}

\theoremstyle{plain}
\newtheorem{theorem}{Theorem}[section]
\newtheorem{lemma}[theorem]{Lemma}
\newtheorem{proposition}[theorem]{Proposition}

\theoremstyle{definition}
\newtheorem{definition}[theorem]{Definition}

\theoremstyle{remark}
\newtheorem{remark}[theorem]{Remark}

\newcommand{\ccl}{c}

\DeclareMathOperator{\ind}{ind}

\title{\Large\bfseries Cyclically Colored Triangulations: Enumeration and Connectedness of Reconfiguration Graphs}

\author{
  Daniel Debrohim\thanks{Institute of Mathematics and Statistics, Rio de Janeiro State University (UERJ), Rio de Janeiro, Brazil. Email: \texttt{daniel.debrohim@pos.ime.uerj.br}} \and
  Diana Sasaki\thanks{Institute of Mathematics and Statistics, Rio de Janeiro State University (UERJ), Rio de Janeiro, Brazil. Email: \texttt{diana.sasaki@ime.uerj.br}} \and
  Patr\'icia Nunes\thanks{Institute of Mathematics and Statistics, Rio de Janeiro State University (UERJ), Rio de Janeiro, Brazil. Email: \texttt{nunes@ime.uerj.br}}
}

\date{}

\begin{document}

\maketitle

\begin{abstract}
We study the connectedness and enumeration of reconfiguration graphs of valid triangulations of convex polygons whose vertices are cyclically colored with $j \ge 3$ colors, where every triangle has vertices of three pairwise distinct colors.

For $j = 3$, we settle a conjectural expectation of Acharya,
M\"utze, and Verciani: we prove that the twist graph $\mathcal{H}_{3k+2}$ is connected for every $k \ge 4$, whereas $\mathcal{H}_8$ and $\mathcal{H}_{11}$ are disconnected. Using a colored root-edge decomposition that induces Cartesian products in the state space, we obtain coupled recurrences for $T(3k)$ and $T(3k+2)$. The corresponding generating functions reduce to the equation $U(x) = 1 + xU(x)^4$, and the difference between the two consecutive families is given by the Raney number $T(3k+3) - T(3k+2) = R_{4,5}(k-1)$.

For $j \ge 4$, reconfiguration is performed by validity-preserving diagonal flips. We extend the root-edge decomposition to all admissible classes $N \not\equiv 1 \pmod{j}$, obtaining, for each fixed $j$, a finite algebraic system of functional equations. We further prove that the flip graph $\mathcal{G}_N^{(j)}$ is connected whenever valid triangulations exist. Thus, the root-edge decomposition provides a unified structural framework for the enumeration and reconfiguration of cyclically colored triangulations.
\end{abstract}
\medskip
\noindent\textbf{Keywords:}
colored triangulations; reconfiguration graphs; connectedness;
twists; flips; root-edge decomposition; generating functions;
Raney numbers.
\section{Introduction}
\label{sec:intro}

Triangulations of convex polygons have been studied since the work of Euler. Their enumeration leads to the Catalan numbers: the number of triangulations of a convex polygon with $n + 2$ vertices is the Catalan number $C_n$. A classical way to obtain their recurrence is to fix a boundary edge, distinguish the unique triangle incident with it, and observe that the other two sides of this triangle split the polygon into two independently triangulated subpolygons. This is the decomposition underlying Segner's recurrence
\[
C_{n+1} = \sum_{i=0}^{n} C_i C_{n-i},
\]
introduced in Segner's original work on polygon triangulations~\cite{segner1761}; see also Stanley~\cite{stanley1999}. We shall refer to it as the \emph{root-edge decomposition}. Beyond convex polygons, triangulations and their local transformations arise naturally in discrete geometry, planar point sets, and triangulated surfaces~\cite{lawson1972,hurtado1999,ItoEtAl2022}.

The set of triangulations can itself be organized as a graph whose vertices are triangulations and whose edges correspond to diagonal flips. In the convex case, this flip graph is the $1$-skeleton of the associahedron~\cite{stasheff1963,lee1989,devadoss1999}. It is connected~\cite{lawson1972}, Hamiltonian~\cite{lucas1987}, and its diameter is $2N - 10$ for $N > 12$~\cite{sleator1988,pournin2014}. Such reconfiguration structures are closely tied to Gray codes and the algorithmic generation of combinatorial objects~\cite{vandenheuvel2013,nishimura2018,mutze2023gray}.

Color constraints restrict the set of admissible triangulations and may also change the appropriate local reconfiguration operation. Sagan~\cite{sagan2008}, motivated by questions of Propp, enumerated proper partitions of cyclically colored polygons. In the specialization to triangulations with three colors, properness means that every triangle contains all three colors. This is precisely our validity condition in the three-color case and is equivalent to requiring that no edge of the triangulation be monochromatic.

For $j \ge 4$, however, the framework considered here is different from Sagan's original notion of a proper partition. We continue to work with triangulations, so every face has three vertices, while the polygon uses $j > 3$ colors. Consequently, a triangular face cannot contain all $j$ colors. Instead, we call a triangulation valid when every triangle has three pairwise distinct vertex colors. This agrees with the extension suggested by Sagan for the regime in which the number of colors exceeds the size of a face, but it is not a proper $k$-partition in his original sense. In this $j \ge 4$ regime, the reconfiguration graph is defined using ordinary diagonal flips that preserve validity.

There is also an important distinction between the boundary-coloring conventions. Sagan uses a shifted size parameter: his quantity $b_s$ counts triangulations of a polygon with $s + 2$ vertices, whereas our parameter $N$ is the actual number of vertices, so
\[
T(N) = b_{N-2}.
\]
Moreover, when $N \equiv 1 \pmod{3}$, the genuine cyclic coloring forces the closing boundary edge $v_N v_1$ to be monochromatic. Sagan obtains a nonzero class in the corresponding shifted family by modifying the color assigned to the last vertex. We retain the periodic coloring throughout the entire boundary. Consequently, no valid triangulation exists when
\[
N \equiv 1 \pmod{3}.
\]

In the three-color regime, a diagonal flip does not preserve
validity. Acharya, M\"utze, and Verciani~\cite{AcharyaMutzeVerciani2025} introduced a validity-preserving local operation, called a \emph{twist}, supported on a local hexagon. They defined the twist graph $\mathcal{H}_N$, proved its connectedness when $3 \mid N$, and suggested that $\mathcal{H}_N$ should be disconnected when
$N \equiv 2 \pmod{3}$. We show that this disconnection is confined to the two initial cases $N = 8$ and $N = 11$.

\begin{theorem}\label{thm:intro-main}
The twist graph $\mathcal{H}_{3k+2}$ is connected for every $k \ge 4$.
\end{theorem}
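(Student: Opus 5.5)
We argue by induction on $k$, using the root-edge decomposition to turn a valid triangulation of the $(3k+2)$-gon into a product of smaller colored pieces on which twists act independently. Fix the boundary edge $v_1v_2$ (colors $1,2$) and the unique triangle $v_1v_2v_m$ on it. Validity forces $v_m$ to have color $3$, so $m\equiv 0 \pmod 3$. The two sides $v_2v_m$ and $v_mv_1$ cut off polygons with $m-1$ and $N-m+2$ vertices. For $N=3k+2$, the pair of residues mod $3$ of these sizes is forced, and one side is of type $3\mid n$ while the other is of type $\equiv 2 \pmod 3$, with compatible boundary colorings.

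The set of valid triangulations with apex $v_m$ is therefore a Cartesian product of the state spaces of the two subpolygons. Twists supported inside a subpolygon are twists of the whole polygon, so each such class induces a subgraph of $\mathcal{H}_N$ containing the product of the two smaller twist graphs. The factors with $3\mid n$ are connected by the result of Acharya, M\"utze, and Verciani. The factors with $n\equiv 2\pmod 3$ are connected by the induction hypothesis once $n\ge 14$. Hence each apex class is connected whenever its $\equiv 2$ factor has at least $14$ vertices, or is trivially small (a triangle or the $5$-gon, which has a unique valid triangulation).

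It remains to glue the apex classes together and to handle the classes whose $\equiv 2$ factor is $\mathcal{H}_8$ or $\mathcal{H}_{11}$. For gluing, I would exhibit, for consecutive admissible apices $v_m$ and $v_{m+3}$, one explicit twist on the hexagon $v_1v_2v_m\dots v_{m+3}$ region that moves the root triangle's apex from $v_m$ to $v_{m+3}$ while fixing everything else. This chains all apex classes into a single component, provided each class is connected.

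The main obstacle is the classes whose $\equiv 2$ factor is the disconnected $\mathcal{H}_8$ or $\mathcal{H}_{11}$. Since $k\ge4$, i.e.\ $N\ge14$, the complementary factor then has at least $9$ vertices. I would show that every component of $\mathcal{H}_8$ and $\mathcal{H}_{11}$ (their components can be listed by computer or by hand) can be reached inside the larger polygon. The route is to leave the class: rotate the root apex using a twist that straddles the cut diagonal, and re-enter in a different component. Equivalently, I would re-root at a different boundary edge so that the small bad factor is absorbed into a connected factor of size at least $14$. The cleanest version is to prove that every valid triangulation of the $N$-gon with $N\ge14$ has some boundary edge whose root decomposition has both factors in the connected regime. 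Then any two triangulations are joined by moving through well-behaved apex classes. The base cases $N=14$ and possibly $N=17$ may need direct verification, presumably by computer, since below that the re-rooting argument lacks room.
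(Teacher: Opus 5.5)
There is a genuine gap, and it starts with the residue count in your first step. With root edge $v_1v_2$ and apex $v_m$, $m=3i$, the two pieces have $m-1=3i-1\equiv 2$ and $N-m+2=3(k-i+1)+1\equiv 1 \pmod 3$ vertices. So neither piece has order divisible by $3$.

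The second piece $(v_m,\dots,v_N,v_1)$ contains the boundary edge $v_Nv_1$, the one place where the periodic pattern wraps around (its colors are $B,A$). Its inherited coloring is therefore not a cyclic coloring at all: it is a canonical $3(k-i+1)$-gon with an extra vertex of color $A$ inserted. Neither the Acharya--M\"utze--Verciani theorem nor your induction hypothesis says anything about the twist graphs of such polygons. So none of your apex classes is known to be connected.

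Worse, with this root the bad factors cannot be avoided. For $N=14$, the apex $v_9$ gives a class $\mathcal H_8\square X$, where $X$ is the (nonempty) twist graph of the $7$-gon $(v_9,\dots,v_{14},v_1)$. The apex $v_{12}$ gives a class isomorphic to $\mathcal H_{11}$, because the leftover quadrilateral $v_{12}v_{13}v_{14}v_1$, colored $C,A,B,A$, has a unique valid triangulation. Both classes are disconnected.

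Your remedy is re-rooting so that every triangulation has a good root edge, plus computer checks at $N=14,17$. It is not carried out. Even granting it, linking good classes that belong to different root edges is asserted rather than proved. Your consecutive-apex twist on $v_1,v_2,v_{3i},v_{3i+1},v_{3i+2},v_{3i+3}$ is in fact valid for this root; the trouble is only that the classes it chains are not known to be connected.

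The missing idea is to root at the wrap-around edge $v_Nv_1$ itself, as the paper does. Its endpoints are colored $B$ and $A$, so the apex is some $w_i=v_{3i}$. The two sides $(v_1,\dots,v_{3i})$ and $(v_{3i},\dots,v_N)$ then have $3i$ and $3(k-i+1)$ vertices, with canonical colorings. Each class $\mathcal S_i$ is $\mathcal H_{3i}\square\mathcal H_{3(k-i+1)}$, connected directly by the Acharya--M\"utze--Verciani theorem. No induction is needed, and $\mathcal H_8$, $\mathcal H_{11}$ never appear as factors.

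The difficulty moves to the gluing. For this root, consecutive classes are not joined by any twist: between $v_{3p}$ and $v_{3p+3}$ the colors occur as $A,B$, which is the wrong order to complete the hexagon. However, the hexagon $v_N,v_1,v_{3p},v_{3p+2},v_{3p+4},v_{3q}$ joins $\mathcal S_p$ and $\mathcal S_q$ whenever $q\ge p+2$. The edges $\mathcal S_1\mathcal S_q$ (for $q\ge 3$) and $\mathcal S_2\mathcal S_4$ then connect the quotient, and this is exactly where the hypothesis $k\ge 4$ is used.
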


Together with the result of Acharya, M\"utze, and Verciani for $3 \mid N$, and with the direct analysis of the exceptional graphs $\mathcal{H}_8$ and $\mathcal{H}_{11}$, this gives a complete connectedness classification of the cyclically $3$-colored twist graphs.

Our proof starts from the classical root-edge decomposition. In the colored setting, the color of the third vertex of the triangle incident with the distinguished boundary edge is constrained, and its congruence class determines the two resulting subproblems. We use this observation in two complementary ways. First, it partitions $\mathcal{H}_N$ into base-triangle classes isomorphic to Cartesian products of smaller twist graphs; these classes are then joined by explicit local twists. Second, it produces coupled recurrences for $T(3k)$ and $T(3k+2)$. Thus, the classical Catalan decomposition itself is not new. The point is that, under the periodic three-coloring, it simultaneously explains the structure of the reconfiguration graph and the mutual enumerative dependence of the two admissible residue classes.

The same root-edge principle also applies to cyclic colorings with $j \ge 4$ colors, but now the local operation is a validity-preserving diagonal flip. Writing
\[
N = jk + r, \qquad 0 \le r \le j - 1,
\]
we derive a general recurrence for every admissible residue class $r \ne 1$. At graph level, each term of the recurrence corresponds to a base-triangle class inducing a Cartesian product of two smaller restricted flip graphs. This product structure leads to our second connectedness result.

\begin{theorem}\label{thm:intro-jge4}
Let $j \ge 4$ and let $N = jk + r$, where $0 \le r \le j - 1$ and $r \ne 1$. Then the  flip graph $\mathcal{G}_N^{(j)}$  is connected.
\end{theorem}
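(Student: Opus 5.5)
We prove the statement by strong induction on $N$, using the root-edge decomposition. We treat the general setting of a convex polygon whose boundary vertices carry colors from $\{0,\dots,j-1\}$ such that consecutive boundary vertices (including the closing pair $v_Nv_1$) have distinct colors. Validity of a triangulation means every triangle has three distinct colors, and two valid triangulations are adjacent when they differ by a single flip whose result is again valid. The cyclically colored polygon is a special case, and sub-polygons cut off by diagonals of a valid triangulation inherit a coloring of the same kind. The inductive statement we aim for is that, for any such colored polygon, the restricted flip graph on its valid triangulations is connected (possibly empty). This stronger statement is needed because the sub-polygons are no longer cyclically colored.

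\textbf{Step 1: product structure.} We fix the root edge $v_1v_N$ and, for each admissible apex $v_m$ (one with color distinct from those of $v_1$ and $v_N$), we let $\mathcal{C}_m$ be the set of valid triangulations containing the triangle $v_1v_mv_N$. Flips inside the two sub-polygons $P_1=v_1\dots v_m$ and $P_2=v_m\dots v_N$ never touch the base triangle. Hence $\mathcal{C}_m$ induces the Cartesian product $\mathcal{G}(P_1)\,\square\,\mathcal{G}(P_2)$, which is connected by induction whenever both factors are nonempty.

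\textbf{Step 2: joining classes.} It remains to show that the quotient graph on the nonempty classes is connected. Let $m<m'$ be admissible apexes with nonempty classes. We show that we can pass from $\mathcal{C}_m$ to a class with apex strictly between $m$ and $m'$, or to $\mathcal{C}_{m'}$ directly.

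Take any $T\in\mathcal{C}_m$. Inside $P_2$, the edge $v_mv_N$ has an apex $v_p$. If $\operatorname{col}(v_p)\neq\operatorname{col}(v_1)$, then the quadrilateral $v_1v_mv_pv_N$ has diagonal $v_mv_N$, and flipping it to $v_1v_p$ yields the triangles $v_1v_mv_p$ and $v_1v_pv_N$. Both are valid because all relevant colors are pairwise distinct: $v_m\neq v_1$ and $v_p\neq v_1$ by assumption, $v_m\neq v_p$ since $v_mv_pv_N$ was valid, and $v_p\neq v_N$ likewise. This flip moves $T$ into $\mathcal{C}_p$ with $p>m$.

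So the key claim is the following: if $\mathcal{C}_{m'}\neq\emptyset$ for some $m'>m$, then some $T\in\mathcal{C}_m$ has a valid triangulation of $P_2$ in which the apex over $v_mv_N$ does not share the color of $v_1$. We would prove this by choosing the triangulation of $P_2$ to contain $v_mv_{m'}$ (or a suitable fan), using induction and the existence of valid triangulations built from the triangulation in $\mathcal{C}_{m'}$.

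\textbf{Main obstacle.} The hard step is exactly this existence claim. We must show that the triangulations of $P_2$ cannot all be "blocked", with every apex over $v_mv_N$ colored like $v_1$. When blocking does occur, we would instead flip in the other direction, on the $P_1$ side over the edge $v_1v_m$, and argue via the cyclic coloring with $j\ge4$ that the two sides cannot be blocked simultaneously. The reason is that with at least four colors, a boundary run of length at least $2$ always contains a vertex avoiding two prescribed colors. Because this counting uses $j\ge4$, it also explains why plain flips suffice here while the case $j=3$ needs twists.

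If this generalized statement proves too strong, we would fall back to tracking only the specific colored sub-polygons produced by the recurrence, namely cyclically colored runs closed by a chord, and run the same argument on that family.
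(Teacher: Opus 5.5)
Your Step~1 matches the paper's product decomposition, and the flip you describe in Step~2 (replacing $v_mv_N$ by $v_1v_p$) is the same flip the paper uses. However, the statement that makes Step~2 work is left as an unproved ``key claim''. The blocking argument you sketch for it is not a proof, and the counting fact it relies on is false as stated: a run of two consecutive vertices may carry exactly the two prescribed colors.

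\textbf{The missing step.} Work only with \emph{consecutive} admissible indices $m_q<m_{q+1}$, and construct the needed triangulation directly instead of searching for one.
\begin{enumerate}
\item Since $j\ge4$, at least $j-2\ge2$ residues modulo $j$ are admissible. Hence consecutive admissible indices cannot be congruent modulo $j$: otherwise the $j-1$ indices strictly between them would cover every other residue, including another admissible one.
\item It follows that $v_1,v_{m_q},v_{m_{q+1}},v_N$ carry four distinct colors.
\item The three arcs cut off by the quadrilateral $(v_1,v_{m_q},v_{m_{q+1}},v_N)$ have orders $m_q$, $m_{q+1}-m_q+1$ and $N-m_{q+1}+1$. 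None of these is congruent to $1$ modulo $j$, so each arc admits a valid triangulation.
\item Triangulate the three arcs validly and add the diagonal $v_{m_q}v_N$. This gives a triangulation in $\mathcal B_{m_q}$ whose apex over $v_{m_q}v_N$ is $v_{m_{q+1}}$.
\item Flipping $v_{m_q}v_N$ to $v_1v_{m_{q+1}}$ preserves validity and lands in $\mathcal B_{m_{q+1}}$. This gives the chain $\mathcal B_{m_1}-\cdots-\mathcal B_{m_s}$ with no case analysis.
\end{enumerate}
You also need that every order not congruent to $1$ modulo $j$ admits a valid triangulation, which follows from a one-line induction. This shows every admissible class is nonempty, so restricting to ``nonempty classes'' is unnecessary.

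\textbf{The generalized hypothesis is false.} Your induction cannot go through as framed, because the stronger statement you induct on fails. Take a hexagon colored $0,1,2,0,1,2$; this is a legitimate member of your class for any $j\ge4$. It has exactly two valid triangulations. No flip preserves validity: in any quadrilateral formed by two valid triangles, the two non-shared vertices must both receive the third color, so the new diagonal is monochromatic. Hence its flip graph is disconnected.

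\textbf{The generalization is also unnecessary.} Contrary to your remark, sub-polygons cut off by a chord are still cyclically colored. A contiguous arc $(v_a,\ldots,v_b)$ closed by the chord $v_bv_a$ is exactly a cyclically colored $(b-a+1)$-gon after shifting color names by $a-1$, closing edge included. So your fallback family is simply the cyclically colored polygons themselves, which is what the paper inducts on.
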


Equivalently, for every $j \ge 4$, the graph $\mathcal{G}_N^{(j)}$ is connected whenever valid triangulations exist. These results show that the root-edge decomposition provides a common structural mechanism for two distinct reconfiguration regimes: twists for three colors and validity-preserving flips for four or more colors.

The remainder of the paper is organized as follows. In \Cref{sec:prelim}, we introduce valid triangulations, local twists, and the twist graph. In \Cref{sec:base-decomposition}, the base-triangle partition and the product structure of \Cref{lem:product} are used to prove \Cref{thm:intro-main}, together with the exceptional disconnected cases in \Cref{prop:exceptions}. In \Cref{sec:enumeration}, we derive the coupled recurrences of \Cref{prop:coupled}, relate the coupled recurrences to Sagan's closed formulas
in \eqref{eq:sagan-closed-forms}, and obtain the Raney-number identity of \Cref{thm:gap}. Finally, in \Cref{sec:more-colors}, we derive the general recurrence of \Cref{lem:recurrence-general} and prove the connectedness result stated in \Cref{thm:intro-jge4}.

\section{Preliminaries}
\label{sec:prelim}

\begin{definition}[Cyclic \(j\)-coloring]
\label{def:cyclic-coloring}
Let \(P_N=(v_1,v_2,\ldots,v_N)\) be a convex polygon. A
\emph{cyclic \(j\)-coloring} of \(P_N\) is a coloring
\[
c:V(P_N)\longrightarrow \{0,1,\ldots,j-1\}
\]
defined by
\[
c(v_i)=i-1 \pmod j.
\]
\end{definition}
In the three-color case, we shall occasionally write
\[
A:=0,\qquad B:=1,\qquad C:=2.
\]
\begin{definition}[Valid triangulation]
\label{def:valid-triangulation}
A triangle is called \emph{valid} if its three vertices have pairwise distinct colors. A triangulation of $P_N$ is called \emph{valid} if all of its triangles are valid.
\end{definition}

\begin{definition}[Flip]
Let $T$ be a triangulation of a convex polygon. A \emph{flip} is a local operation performed on a convex quadrilateral formed by two adjacent triangles of $T$, consisting of replacing one diagonal by the other possible diagonal. See \Cref{fig:flip}.
\end{definition}

\begin{figure}[htbp]
  \centering
  \includegraphics[width=0.48\linewidth]{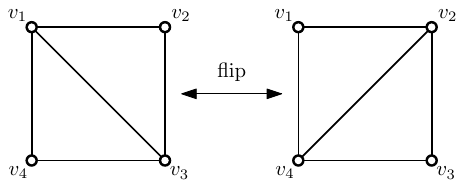}
  \caption{Example of a flip in a convex quadrilateral: the diagonal $v_1v_3$ is replaced by the diagonal $v_2v_4$.}
  \label{fig:flip}
\end{figure}

\begin{definition}[Twistable triangle]
\label{def:twistable}
Let $T$ be a triangulation of $P_N$. A triangle $\tau=\{u,v,w\}\in T$ is called \emph{twistable} if each of its three edges is a diagonal of $P_N$. We denote by $\mathcal G(T)$ the set of all twistable triangles of $T$.
\end{definition}

The following lemma gives a geometric characterization of the twistable triangles defined in \Cref{def:twistable}.

\begin{lemma}[Hexagon characterization]
\label{lem:local-hexagon}
Let $T$ be a triangulation of $P_N$, and let $\tau\in T$. Then $\tau$ is twistable if and only if it is contained in a hexagon.
\end{lemma}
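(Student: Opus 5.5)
We read ``contained in a hexagon'' in the sense relevant for twists. There should be vertices $x,y,z$ of $P_N$ such that the six vertices $u,x,v,y,w,z$, in this cyclic order, span a convex hexagon $H$ of $P_N$. Moreover, $T$ restricted to $H$ should consist of $\tau$ together with the three ``ear'' triangles $\{u,x,v\}$, $\{v,y,w\}$, $\{w,z,u\}$, and the sides of $\tau$ should be the three alternate diagonals of $H$. The proof splits into the two implications. Throughout we use only the convexity of $P_N$ and the fact that in a triangulation of a convex polygon every diagonal lies in exactly two triangles, while every polygon side lies in exactly one.

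For the forward direction, assume $\tau=\{u,v,w\}$ is twistable and label its vertices so that $u,v,w$ appear in this cyclic order along $\partial P_N$.
\begin{enumerate}
\item Since $uv$ is a diagonal and not a side of $P_N$, the boundary arc of $P_N$ strictly between $u$ and $v$ (the one avoiding $w$) is nonempty.
\item The edge $uv$ therefore lies in a second triangle $\{u,v,x\}$ of $T$, whose apex $x$ must lie on the side of $uv$ opposite to $w$. By convexity this means $x$ lies strictly inside that arc.
\item The same argument applied to $vw$ and $wu$ produces apexes $y$ and $z$ on the other two arcs.
\item The three open arcs determined by $u,v,w$ are pairwise disjoint. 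Hence $x,y,z$ are distinct from each other and from $u,v,w$, and the cyclic order is $u,x,v,y,w,z$.
\item Six points in convex position span a convex hexagon $H$. The four triangles $\tau$, $\{u,x,v\}$, $\{v,y,w\}$, $\{w,z,u\}$ have disjoint interiors and their areas add up to that of $H$, so they tile $H$. This shows $\tau$ sits inside $H$ as its central triangle.
\end{enumerate}

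For the converse, suppose $\tau$ is the central triangle of such a hexagon $H$, with alternating vertices $u,x,v,y,w,z$. Then each side of $\tau$, say $uv$, has the vertex $x$ of $P_N$ strictly on one side and $w$ on the other. So $uv$ is not a boundary edge of the convex polygon $P_N$, since all vertices of $P_N$ lie on one side of each boundary edge. Hence all three sides of $\tau$ are diagonals, and $\tau$ is twistable.

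The main obstacle is making the informal phrase ``contained in a hexagon'' precise: it must include the alternation of the vertices of $\tau$ with the three apexes. A weaker reading, in which the hexagon is merely some hexagon of $P_N$ containing $\tau$, makes the converse false, since any triangle, including an ear, lies in some hexagon when $N\ge 6$. Once the definition is fixed as above, the key geometric step is step~4 of the forward direction. There we must check that the three apexes are distinct and interleave with $u,v,w$, and this follows directly from the separation of $P_N$ by the sides of $\tau$.
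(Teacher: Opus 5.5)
Your proof is correct and follows essentially the same route as the paper: in the forward direction you take the three triangles of $T$ across the sides of $\tau$ and place their apexes in the three disjoint boundary arcs to get the hexagon $u,x,v,y,w,z$, and in the converse you show no side of $\tau$ is a boundary edge (you use the vertices $x$ and $w$ on opposite sides, where the paper uses that each side is shared with a surrounding triangle). Your precise reading of ``contained in a hexagon'' as $\tau$ being the central triangle matches the paper's ``in this way,'' and your remark that a weaker reading would make the converse false is correct.
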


\begin{proof}
Let $\tau=\{u,v,w\}$. Suppose first that $\tau$ is twistable. Then its three edges $uv$, $vw$, and $wu$ are diagonals of $P_N$. Every diagonal of a triangulation is incident with exactly two triangles. Hence, besides $\tau$, there exist unique triangles
\[
\{u,v,x\},\qquad \{v,w,y\},\qquad \{w,u,z\}\in T
\]
incident with $uv$, $vw$, and $wu$, respectively.

The vertices $x,y,z$ are distinct and lie on the sides of the three edges opposite to the interior of $\tau$. Consequently, the six vertices occur in the cyclic order $u,x,v,y,w,z$ and determine a convex hexagon $H$. The restriction of $T$ to $H$ consists precisely of the four triangles
\[
\{u,v,w\},\qquad \{u,v,x\},\qquad \{v,w,y\},\qquad \{w,u,z\}.
\]
Thus, $\tau$ is contained in a hexagon.

Conversely, suppose that $\tau$ is contained in a hexagon in this way. Each of its three edges is shared with one of the three surrounding triangles. Therefore, none of these edges belongs to the boundary of $P_N$. Hence all three edges of $\tau$ are diagonals of $P_N$, and $\tau$ is twistable.
\end{proof}

The hexagon obtained in \Cref{lem:local-hexagon} is called the \emph{hexagon associated with $\tau$}.

\begin{lemma}[Valid triangulations of the associated hexagon]
\label{lem:two-hex}
Let $T$ be a valid triangulation of $P_N$, let $\tau\in\mathcal G(T)$, and let $H$ be the hexagon associated with $\tau$. Then $H$ admits exactly two valid triangulations.
\end{lemma}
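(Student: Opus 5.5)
The plan is to determine the colors of the six vertices of $H$ completely, and then list the triangulations of a hexagon that use no monochromatic edge. We work in the three-color setting in which twists are defined, so $c$ takes values in $\{A,B,C\}$. By \Cref{lem:local-hexagon} and its proof, $H$ has vertices in cyclic order $u,x,v,y,w,z$. The restriction of $T$ to $H$ consists of the four triangles $\{u,v,w\}$, $\{u,v,x\}$, $\{v,w,y\}$ and $\{w,u,z\}$, and each of them is valid because $T$ is valid.

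\emph{Step 1: coloring of $H$.} Since $\{u,v,w\}$ is valid, $u,v,w$ carry the three distinct colors. Write them as $c(u)=a$, $c(v)=b$, $c(w)=c'$ with $\{a,b,c'\}=\{A,B,C\}$. Validity of $\{u,v,x\}$ forces $c(x)\notin\{a,b\}$, hence $c(x)=c'$. In the same way, $c(y)=a$ and $c(z)=b$. Reading around $H$, the colors are therefore $a,c',b,a,c',b$. This is a periodic $3$-coloring of the hexagon in which two vertices have the same color exactly when they are opposite, i.e.\ at cyclic distance $3$.

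\emph{Step 2: a triangulation of $H$ is valid if and only if it avoids the three long diagonals.} A triangle of $H$ is invalid exactly when two of its vertices share a color. By Step 1, this happens exactly when the triangle contains a pair of opposite vertices. Boundary edges of $H$ join vertices at distance $1$, and short diagonals join vertices at distance $2$; both always join distinct colors. Hence a triangle is valid if and only if none of its sides is a long diagonal $ux'$ joining opposite vertices. So a triangulation of $H$ is valid if and only if it uses only short diagonals.

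\emph{Step 3: counting.} A triangulation of a hexagon has exactly $3$ pairwise non-crossing diagonals. We must determine which triples of pairwise non-crossing short diagonals exist. There are six short diagonals, each cutting off an ear at the middle vertex of the three it spans. Two short diagonals cross exactly when their cut-off vertices are adjacent on $H$. Hence a non-crossing set of short diagonals corresponds to a set of pairwise non-adjacent vertices of the $6$-cycle. The independent sets of size $3$ in $C_6$ are exactly the two alternate triples, which yields two triangulations:
\begin{itemize}
\item $\{u,v,w\}$ together with its three ears, which is the restriction of $T$;
\item $\{x,y,z\}$ together with its three ears.
\end{itemize}
A short check confirms that both are valid, since each uses only short diagonals. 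Therefore $H$ admits exactly two valid triangulations.

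I expect Step 3 to be the main point needing care. The obstacle is making rigorous that no triangulation of the hexagon avoids long diagonals except these two. The crossing criterion for short diagonals (crossing iff the cut-off vertices are adjacent) reduces this to the elementary independent-set count in $C_6$. Alternatively, one can enumerate all $14$ triangulations: $6$ fans and $6$ zigzags each contain a long diagonal, leaving only the $2$ central-triangle triangulations.
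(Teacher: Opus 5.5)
Your proof is correct and follows the same route as the paper: validity of the three outer triangles forces the colors of $x,y,z$, so the only admissible diagonals are the six short diagonals $uv,vw,wu,xy,yz,zx$, and only the two central-triangle triangulations survive. Your crossing criterion reduces the count to independent $3$-sets in the $6$-cycle, which gives a rigorous argument for the step the paper only asserts, namely that these six diagonals form exactly two maximal noncrossing sets.
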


\begin{proof}
Write $\tau=\{u,v,w\}$. By \Cref{lem:local-hexagon}, the vertices of $H$ occur in the cyclic order $u,x,v,y,w,z$. Up to a permutation of the colors, assume that
\[
c(u)=A,\qquad c(v)=B,\qquad c(w)=C.
\]
Since the three triangles surrounding $\tau$ are valid, we obtain
\[
c(x)=C,\qquad c(y)=A,\qquad c(z)=B.
\]

Hence the only admissible diagonals of $H$ are $uv, vw, wu$ and $xy, yz, zx$. These diagonals form exactly two maximal noncrossing sets, corresponding to the central triangles $\{u,v,w\}$ and $\{x,y,z\}$. The two resulting triangulations are displayed in \Cref{fig:hexagon-twist}.
\end{proof}

\begin{remark}
An uncolored convex hexagon admits
\[
C_4=\frac{1}{5}\binom{8}{4}=14
\]
triangulations. Under the coloring forced by the validity condition, only the two triangulations of \Cref{lem:two-hex} remain valid, as illustrated in \Cref{fig:hexagon-twist}.
\end{remark}

\begin{figure}[htbp]
  \centering
  \includegraphics[width=0.68\linewidth]{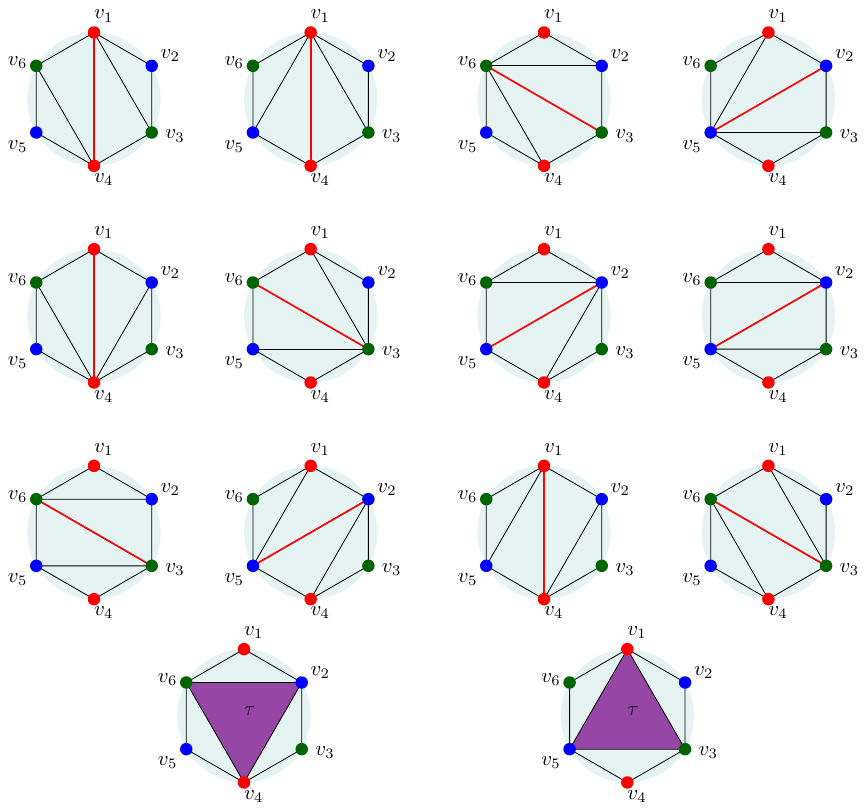}
  \caption{The $14$ triangulations of a convex hexagon. A red diagonal represents a monochromatic edge and therefore certifies that the corresponding triangulation is invalid. In each of the two valid triangulations, the twistable triangle is highlighted in purple.}
  \label{fig:hexagon-twist}
\end{figure}

Acharya, M\"utze, and Verciani~\cite{AcharyaMutzeVerciani2025} introduced the twist operation and the corresponding twist graph $\mathcal H_N$. Motivated by their construction, we make the local transformation underlying the twist explicit through the operator defined in \Cref{def:localtwist}.

\begin{definition}[Local twist]
\label{def:localtwist}
Let $T$ be a valid triangulation of $P_N$, let $\tau\in\mathcal G(T)$, and let $H$ be the hexagon associated with $\tau$. By \Cref{lem:two-hex}, the restriction of $T$ to $H$ is one of exactly two valid triangulations.

The \emph{local twist} at $\tau$ is the operation
\[
\Psi(T|_H,\tau)=(T'|_H,\tau'),
\]
where $T'$ is obtained from $T$ by replacing its restriction to $H$ with the other valid triangulation of $H$, and $\tau'$ is the central triangle of the new triangulation of $H$.
\end{definition}

\begin{definition}[Twist graph]
\label{def:twist-graph}
The \emph{twist graph} $\mathcal H_N$ is the graph whose vertices are the valid triangulations of $P_N$. Two valid triangulations $T,T'\in V(\mathcal H_N)$ are adjacent if and only if there exists a twistable triangle $\tau\in\mathcal G(T)$ such that, writing $H$ for the hexagon associated with $\tau$,
\[
\Psi(T|_H,\tau)=(T'|_H,\tau')
\]
for some $\tau'\in\mathcal G(T')$.
\end{definition}

\Cref{fig:graph-twist} illustrates this definition for $N=12$. Each vertex of the graph represents a valid triangulation of $P_{12}$, and two vertices are joined by an edge precisely when the corresponding triangulations differ by a single local twist.

\begin{figure}[htbp]
  \centering
  \includegraphics[width=\linewidth]{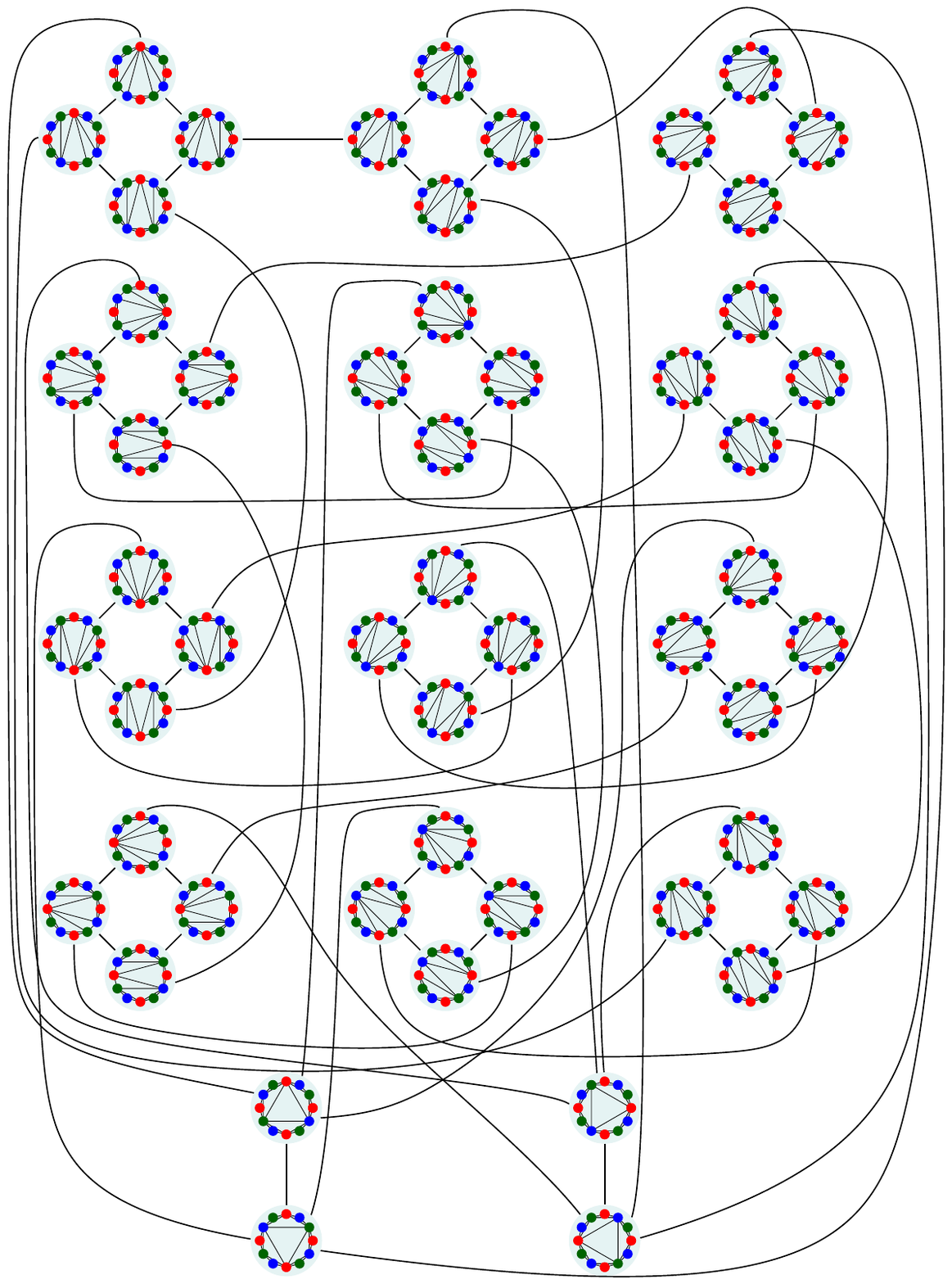}
  \caption{The twist graph $\mathcal H_{12}$. Each vertex represents a valid triangulation of the cyclically $3$-colored polygon $P_{12}$, and each edge represents a local twist between the corresponding triangulations.}
  \label{fig:graph-twist}
\end{figure}

\begin{theorem}[Acharya--M\"utze--Verciani \cite{AcharyaMutzeVerciani2025}]\label{thm:amv}
If $3\mid N$, then $\mathcal H_N$ is connected.
\end{theorem}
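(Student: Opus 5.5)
The plan is strong induction on $k$ with $N=3k$, organized around the root-edge decomposition of the introduction. Fix the boundary edge $v_1v_N$, with $c(v_1)=A$ and $c(v_N)=C$. In any valid triangulation $T$, the triangle on this edge has an apex $v_m$ with $c(v_m)=B$, that is, $m\equiv 2\pmod 3$. Partition $V(\mathcal H_N)$ into \emph{base-triangle classes} $\mathcal C_m$ according to $m$.

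\textbf{Step 1 (product structure).} Inside $\mathcal C_m$, the triangulation is determined by independent valid triangulations of $P_L=(v_1,\dots,v_m)$ and $P_R=(v_m,\dots,v_N)$. Both inherit cyclic colorings, up to a rotation of the colors. No twist can involve the base triangle $\{v_1,v_m,v_N\}$, because its edge $v_1v_N$ is a boundary edge. Hence the twists that stay in $\mathcal C_m$ are exactly the twists of the two subpolygons, and the subgraph induced on $\mathcal C_m$ is isomorphic to $\mathcal H_m\,\square\,\mathcal H_{N-m+1}$. Both sizes $m$ and $N-m+1$ are $\equiv 2\pmod 3$, so the induction cannot simply be closed on multiples of $3$. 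I would therefore run the induction jointly. For $3\mid n$ the claim is connectedness. For $n\equiv 2\pmod 3$ the claim is weaker: every valid triangulation of $P_n$ can be joined in $\mathcal H_n$ to one of an explicit, small family of canonical triangulations. For instance, these could be the triangulations in which $v_1v_n$ lies in a triangle whose apex sits in a prescribed position, with the remainder canonical.

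\textbf{Step 2 (joining classes).} Next I would exhibit explicit twists from $\mathcal C_m$ to $\mathcal C_{m\pm 3}$. Take $T\in\mathcal C_m$ whose triangulation of $P_R$ contains the triangle $\{v_m,v_{m+2},v_N\}$ together with an ear at $v_m v_{m+1} v_{m+2}$. Then the triangle $\{v_1,v_m,v_N\}$ cannot be twisted, but the triangle $\{v_m,v_{m+2},v_N\}$ is twistable once the side $v_mv_N$ is interior. Its hexagon is $v_1,v_m,v_{m+1},v_{m+2},\dots,v_N$ (schematically), and twisting there replaces the apex $v_m$ by $v_{m+3}$. The detailed choice of the six vertices is a routine case check using \Cref{lem:two-hex}.

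\textbf{Step 3 (repairing disconnected factors; main obstacle).} The main obstacle is that $\mathcal H_m$ and $\mathcal H_{N-m+1}$ may be disconnected; $\mathcal H_8$ already is. So $\mathcal C_m$ itself need not be connected, and Step 2 has to be used to merge components, not just classes. The first thing I would try is to show that every component of $\mathcal C_m$ contains a triangulation to which a class-changing twist from Step 2 applies. Then I would show that going $\mathcal C_m\to\mathcal C_{m+3}\to\mathcal C_m$ along different hexagons can land in a different component of $\mathcal C_m$. This uses the fact that twists across the diagonal $v_mv_N$, which are invisible in $P_L$ and $P_R$ separately, act on the pair of components. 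Formally, I would track a component-invariant of the $2\pmod 3$ pieces, extracted from the small cases $n=5,8$. I would then check that the base-change twists shift this invariant in a way that reaches all values when $3\mid N$. Connectedness of $\mathcal H_N$ then follows by combining Steps 1–3 with the inductive hypothesis, starting from the base cases $N=3,6$.
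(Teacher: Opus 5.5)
The paper gives no proof of this statement; it is quoted from Acharya--M\"utze--Verciani. Your argument therefore has to stand on its own, and as written it has a genuine gap.

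Steps~1 and~2 are fine in outline. The class $\mathcal C_m$ induces $\mathcal H_m\square\mathcal H_{N-m+1}$ with both orders $\equiv 2\pmod 3$, and this is exactly why the induction does not close on multiples of $3$. One small correction to Step~2: the new apex is the apex $v_y$ of the triangle of $T$ on $v_{m+2}v_N$, which is $v_{m+3}$ only in a special configuration.

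Step~3 carries the whole difficulty, and it is a list of intentions rather than an argument:
\begin{enumerate}
\item The inductive statement for orders $\equiv 2\pmod 3$ is never fixed. ``Every triangulation can be joined to one of a small canonical family'' has no content until you show that these representatives are linked inside $\mathcal H_N$.
\item The component invariant is never defined.
\item Nothing shows that base-changing twists reach every component of every class.
\end{enumerate}
Note also that the disconnected pieces include $\mathcal H_{11}$, not only $\mathcal H_8$ (\Cref{prop:exceptions}). You also cannot simply invoke \Cref{thm:conn-3k2} for the larger pieces, since its proof relies on \Cref{thm:amv}.

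The missing observation is that, when $3\mid N$, a base-changing twist is \emph{always} available, so no component tracking is needed. Let $T\in\mathcal C_m$ with $m\ge5$, and let $\{v_1,v_p,v_m\}$ be the triangle of $T$ on $v_1v_m$. Then $c(v_p)=C$ forces $3\le p\le m-2$, so all its sides are diagonals and it is twistable. Its hexagon is $v_1,v_x,v_p,v_z,v_m,v_N$, where $v_x$ and $v_z$ are the apices of the triangles on $v_1v_p$ and $v_pv_m$. The twist produces the base triangle $\{v_1,v_x,v_N\}$ with $x<m$.

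Iterating this joins every vertex of $\mathcal H_{3k}$ to $\mathcal C_2$, and $\mathcal H_{3k}[\mathcal C_2]\simeq\mathcal H_{3k-1}$. The proofs of \Cref{lem:product}, \Cref{lem:interclass-twists} and \Cref{thm:conn-3k2} for $\mathcal H_{3k-1}$ use connectedness only of $\mathcal H_{3i}$ with $3i\le 3k-3$. A joint induction therefore gives $\mathcal H_{3k}$ connected for every $k\ge5$, and $N=3,6$ are trivial.

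Only $N=9$ and $N=12$ remain. There $\mathcal C_2\simeq\mathcal H_8$ or $\mathcal H_{11}$ has two components, represented by triangulations containing $\{v_2,v_4,v_N\}$ and $\{v_2,v_7,v_N\}$ respectively. From each of them, twisting the triangle on $v_2v_N$ can produce a triangulation in the single component of $\mathcal C_8$ that contains $\{v_1,v_3,v_8\}$, and this joins the two components. Some argument of this kind must replace your Step~3.
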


\section{The base-triangle decomposition}
\label{sec:base-decomposition}

This section adapts the classical root-edge decomposition underlying Segner's recurrence to the periodic $j$-coloring. In the ordinary Catalan decomposition, the third vertex of the triangle incident with a fixed boundary edge may be chosen arbitrarily. In the present setting, validity forces the color of this third vertex and, consequently, its congruence class modulo $j$.

We call the triangle incident with the fixed boundary edge the \emph{base triangle}, emphasizing its role in the decomposition of both the set of valid triangulations and the corresponding reconfiguration graph.

\begin{definition}[Base triangle and base classes]
\label{def:base}
Let $P_N=(v_1,\ldots,v_N)$ be a cyclically $j$-colored polygon, with $j\ge3$, and let $\mathcal T_N^{(j)}$ denote the set of its valid triangulations.

For every $T\in\mathcal T_N^{(j)}$, the boundary edge $v_1v_N$ belongs to a unique triangle. We write
\[
\Delta(T):=\{v_1,v_m,v_N\}
\]
and call it the \emph{base triangle} of $T$. The vertex $v_m$ is called the \emph{base vertex}.

An index $m\in\{2,\ldots,N-1\}$ is called \emph{admissible} if
\[
c(v_m)\notin\{c(v_1),c(v_N)\}.
\]

For each admissible index $m$, define
\[
\mathcal B_m := \left\{ T\in\mathcal T_N^{(j)} : \Delta(T)=\{v_1,v_m,v_N\} \right\}.
\]
Then
\[
\mathcal T_N^{(j)} = \bigsqcup_{m\ \mathrm{admissible}}\mathcal B_m.
\]
\end{definition}

For the cyclic \(3\)-coloring of
\Cref{def:cyclic-coloring} with \(N=3k+2\), the admissible
base vertices are precisely
\[
v_3,v_6,\ldots,v_{3k}.
\]
Write \(w_i:=v_{3i}\) for \(i\in\{1,\ldots,k\}\).

\begin{definition}[Base index and base-index classes]
\label{def:classes}
Let $T\in V(\mathcal H_N)$. The \emph{base index} of $T$ is the unique $i\in\{1,\ldots,k\}$ such that $\Delta(T)=\{v_1,v_N,w_i\}$. We denote it by $\ind(T):=i$.

For each $i\in\{1,\ldots,k\}$, define
\[
\mathcal S_i := \{T\in V(\mathcal H_N) : \ind(T)=i\}.
\]
Equivalently, $\mathcal S_i=\mathcal B_{3i}$, and therefore $V(\mathcal H_N)=\bigsqcup_{i=1}^{k}\mathcal S_i$.
\end{definition}

\begin{figure}[htbp]
  \centering
  \includegraphics[width=0.68\linewidth]{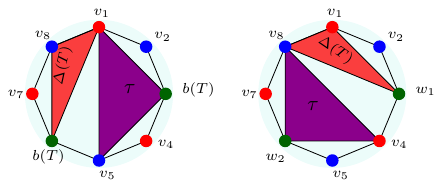}
  \caption{The base triangle in a valid triangulation $T$ of a cyclically $3$-colored polygon $P_N$, where $N=3k+2$. The fixed boundary edge $v_1v_N$ belongs to a unique triangle $\Delta(T)=\{v_1,v_N,w_i\}$, whose third vertex $w_i$ determines the base index of $T$. The base triangle is highlighted in red, while a twistable triangle is highlighted in purple.}
  \label{fig:base-triangle}
\end{figure}

Note that fixing the base triangle partitions the vertex set of $\mathcal H_N$ into pairwise disjoint classes. Indeed, every valid triangulation contains a unique triangle incident with the boundary edge $v_1v_N$, and hence has a unique base triangle. Moreover, the base triangle is not twistable: one of its edges is the boundary edge $v_1v_N$, whereas a twistable triangle must have all three of its edges among the diagonals of $P_N$. Consequently, the base triangle can never be the central triangle of a local twist and therefore cannot move by itself.

The base triangle, its base vertex, and a twistable triangle are illustrated in \Cref{fig:base-triangle}.

\subsection{The structure inside a fixed base-index class}

Fixing the base index fixes the base triangle and its two diagonal edges. These diagonals separate the remaining part of the polygon into two independently triangulated subpolygons. This yields the following product decomposition.

\begin{lemma}[Product structure]
\label{lem:product}
For every $i\in\{1,\ldots,k\}$,
\[
\mathcal H_N[\mathcal S_i] \simeq \mathcal H_{3i}\square \mathcal H_{3(k-i+1)}.
\]
In particular, $\mathcal H_N[\mathcal S_i]$ is connected.
\end{lemma}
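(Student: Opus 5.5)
We prove the isomorphism by an explicit bijection and then take connectedness from \Cref{thm:amv}.

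\emph{The two subpolygons.} Fix $i$ and let $T\in\mathcal S_i$. The base triangle $\{v_1,w_i,v_N\}$ has diagonals $v_1w_i$ and $w_iv_N$, which cut $P_N$ into
\[
L=(v_1,\ldots,v_{3i}) \quad\text{and}\quad R=(v_{3i},\ldots,v_{3k+2}).
\]
$L$ has $3i$ vertices. $R$ has $3k+2-3i+1=3(k-i+1)$ vertices.

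\emph{Colorings.} On $L$ the inherited coloring is $c(v_t)=t-1\bmod 3$, which is already the cyclic $3$-coloring of $P_{3i}$. Its closing edge $v_{3i}v_1$ has colors $C,A$, so it is not monochromatic.

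On $R$, relabel $u_s:=v_{3i+s-1}$. Then $c(u_s)=3i+s-2\equiv s-2 \pmod 3$, a cyclic shift of the standard coloring. Applying the color permutation $x\mapsto x+1$ turns it into exactly the cyclic coloring of $P_{3(k-i+1)}$. Its closing edge $v_Nv_{3i}$ has colors $A,C$.

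Validity only asks that the colors of each triangle be pairwise distinct, which is invariant under permuting colors. Hence valid triangulations of $R$ correspond to $V(\mathcal H_{3(k-i+1)})$, and twists correspond as well.

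\emph{The bijection.} Define
\[
\phi\colon \mathcal S_i\to V(\mathcal H_{3i})\times V(\mathcal H_{3(k-i+1)}),\qquad \phi(T)=(T|_L,T|_R).
\]
It is a bijection because a triangulation containing the base triangle is exactly the union of the base triangle with a triangulation of $L$ and a triangulation of $R$. Such a union is valid if and only if both parts are valid, the base triangle being valid since $m=3i$ is admissible.

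\emph{Adjacency.} This is the main point to check. Suppose $T,T'\in\mathcal S_i$ are adjacent via a twist at $\tau$ with hexagon $H$.
\begin{itemize}
\item The base triangle is not twistable, as observed above.
\item The base triangle is not one of the three outer triangles of $H$ either. A twist replaces all four triangles of $H$, including the outer triangle on edge $v_1v_N$ if that were one of them, which would change $\Delta$ and take $T'$ out of $\mathcal S_i$.
\item The edges $v_1w_i$ and $w_iv_N$ lie on the boundary of $H$ or outside $H$, never in its interior. Otherwise $H$ would contain the base triangle as one of its four triangles. So $H$ lies entirely in $L$ or entirely in $R$.
\end{itemize}

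Say $H\subseteq L$. Its central triangle $\tau$ has edges that are diagonals of $P_N$ interior to $L$. They are not edges of $\partial L$, because each is shared with another triangle inside $H$. So $\tau$ is twistable in $T|_L$, with the same hexagon. The twist changes only $T|_L$, giving an edge of the first factor in the Cartesian product.

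Conversely, let $\tau$ be twistable in $T|_L$. Its edges are diagonals of $L$, hence diagonals of $P_N$, so $\tau$ is twistable in $T$ with the same hexagon inside $L$. Twisting it keeps the base triangle, so the result stays in $\mathcal S_i$.

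The case $H\subseteq R$ is symmetric. Hence $\phi$ preserves and reflects adjacency, and is the claimed isomorphism onto $\mathcal H_{3i}\square\mathcal H_{3(k-i+1)}$.

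\emph{Connectedness.} Both $3i$ and $3(k-i+1)$ are divisible by $3$, so \Cref{thm:amv} shows both factors are connected. A Cartesian product of connected graphs is connected.

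In the degenerate case $i=1$ we have $L=P_3$, and in the case $i=k$ we have $R=P_3$. Then $\mathcal H_3$ is a single vertex, which is trivially connected and is the identity for the Cartesian product.
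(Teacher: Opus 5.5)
Your proof is correct and follows essentially the same route as the paper. Restriction to the two subpolygons cut off by the base triangle gives the bijection onto $V(\mathcal H_{3i})\times V(\mathcal H_{3(k-i+1)})$. The supporting hexagon of any twist within $\mathcal S_i$ must avoid $\Delta_i$ and so lies on one side, and connectedness follows from \Cref{thm:amv} together with the Cartesian product. Your explicit colour shift on $R$, the check that twistability in $T|_L$ matches twistability in $T$, and your handling of the degenerate factor $\mathcal H_3$ spell out details the paper leaves implicit.
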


\begin{proof}
Fix $i\in\{1,\ldots,k\}$ and write $w_i=v_{3i}$ and $\Delta_i:=\{v_1,v_N,w_i\}$. For every $T\in\mathcal S_i$, we have $\Delta(T)=\Delta_i$. Consequently, the two edges $v_1w_i$ and $w_iv_N$ belong to $T$.

Together with the boundary of $P_N$, these two diagonals separate the complement of $\Delta_i$ into the subpolygons
\[
P_i^-=(v_1,v_2,\ldots,v_{3i}) \quad\text{and}\quad P_i^+=(v_{3i},v_{3i+1},\ldots,v_N).
\]
Their numbers of vertices are $|V(P_i^-)|=3i$ and $|V(P_i^+)| = N-3i+1 = 3k+2-3i+1 = 3(k-i+1)$. In particular, both orders are divisible by $3$.

The colorings inherited by $P_i^-$ and $P_i^+$ agree with the canonical periodic $3$-coloring, possibly after a cyclic permutation of the color names. Since a global permutation of the colors does not affect validity or the twist relation, the corresponding reconfiguration graphs are isomorphic to $\mathcal H_{3i}$ and $\mathcal H_{3(k-i+1)}$, respectively.

For each $T\in\mathcal S_i$, let $T^-:=T|_{P_i^-}$ and $T^+:=T|_{P_i^+}$ be the restrictions of $T$ to the two subpolygons. Define
\[
\Phi_i:\mathcal S_i \longrightarrow V(\mathcal H_{3i}) \times V(\mathcal H_{3(k-i+1)}) \quad\text{by}\quad \Phi_i(T):=(T^-,T^+).
\]

Every triangle of $T^-$ or $T^+$ is a triangle of $T$. Therefore, both restrictions are valid. Conversely, let $R^-$ be a valid triangulation of $P_i^-$ and let $R^+$ be a valid triangulation of $P_i^+$. Since their interiors are disjoint, the union $R^-\cup R^+\cup\{\Delta_i\}$ is a triangulation of $P_N$. The triangle $\Delta_i$ is valid, and all the other triangles belong either to $R^-$ or to $R^+$. Therefore, this union is a valid triangulation belonging to $\mathcal S_i$. 

We next verify that $\Phi_i$ preserves adjacency. Let $T,T'\in \mathcal S_i$ differ by a single local twist. Since both triangulations have the same base triangle $\Delta_i$, this twist does not change the triangle incident with $v_1v_N$.

The supporting hexagon cannot have $\Delta_i$ as one of its affected triangles. Indeed, replacing the triangulation inside such a hexagon would change the triangle incident with $v_1v_N$, and the resulting triangulation would not remain in $\mathcal S_i$. Therefore, the supporting hexagon lies entirely in one of the two subpolygons $P_i^-$ or $P_i^+$.

It follows that the twist changes exactly one coordinate of $\Phi_i(T)=(T^-,T^+)$. If the twist is supported in $P_i^-$, then $\Phi_i(T')=((T^-)^\prime,T^+)$, where $T^-$ and $(T^-)^\prime$ are adjacent in $\mathcal H_{3i}$. Similarly, if the twist is supported in $P_i^+$, then $\Phi_i(T')=(T^-,(T^+)^\prime)$, where $T^+$ and $(T^+)^\prime$ are adjacent in $\mathcal H_{3(k-i+1)}$.

Conversely, a local twist in $P_i^-$ extends to a local twist of $P_N$ by leaving $T^+$ and $\Delta_i$ fixed. The analogous statement holds for a local twist in $P_i^+$. Therefore, two triangulations in $\mathcal S_i$ are adjacent if and only if their images under $\Phi_i$ agree in one coordinate and are adjacent in the other.

Hence, $\mathcal H_N[\mathcal S_i] \simeq \mathcal H_{3i}\square \mathcal H_{3(k-i+1)}$.

By \Cref{thm:amv}, both factors are connected because their orders are divisible by $3$. The Cartesian product of two connected graphs is connected. Therefore, $\mathcal H_N[\mathcal S_i]$ is connected.
\end{proof}

\subsection{Connections between distinct base-index classes}

Although the base triangle is fixed within each induced
subgraph \(\mathcal H_N[\mathcal S_i]\), a local twist may
change it when the base triangle is one of the outer triangles
of the supporting hexagon. Such twists produce edges between
distinct base-index classes.

Contracting each induced subgraph
\(\mathcal H_N[\mathcal S_i]\) to a single vertex yields the
quotient graph \(Q_k\), with
\[
V(Q_k)=\{\mathcal S_1,\ldots,\mathcal S_k\}.
\]
Two classes \(\mathcal S_p\) and \(\mathcal S_q\) are adjacent
in \(Q_k\) if there exist triangulations
\[
T_p\in\mathcal S_p
\qquad\text{and}\qquad
T_q\in\mathcal S_q
\]
that differ by a single local twist.

\begin{lemma}[Twists between base-triangle classes]
\label{lem:interclass-twists}
Let \(N=3k+2\), and let
\[
\mathcal S_1,\ldots,\mathcal S_k
\]
be the base-triangle classes associated with the admissible
third vertices
\[
w_p=v_{3p},\qquad 1\le p\le k.
\]
If
\[
1\le p<q\le k
\qquad\text{and}\qquad
q\ge p+2,
\]
then \(\mathcal S_p\) and \(\mathcal S_q\) are adjacent in
\(Q_k\).
\end{lemma}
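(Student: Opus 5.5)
The plan is to exhibit, for each pair $p<q$ with $q\ge p+2$, an explicit valid triangulation $T_p\in\mathcal S_p$ containing a twistable triangle whose associated hexagon has the base triangle $\Delta(T_p)$ as one of its three outer triangles, and to arrange that the local twist replaces the base vertex $w_p$ by $w_q$. In the cyclic order $u,x,v,y,w,z$ of \Cref{lem:local-hexagon}, after the twist the four triangles of $H$ are $\{x,y,z\}$, $\{x,v,y\}$, $\{y,w,z\}$ and $\{z,u,x\}$. So if the boundary edge $v_Nv_1$ is the side $ux$ of $H$, the new base triangle is $\{v_1,v_N,z\}$.

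\textbf{Choice of the hexagon.} Recall $c(v_1)=A$, $c(v_N)=B$ (since $N=3k+2$) and $c(w_p)=C$. I take
\[
u=v_N,\quad x=v_1,\quad v=w_p=v_{3p},\quad y=v_{3p+2},\quad w=v_{3p+4},\quad z=w_q=v_{3q}.
\]
These vertices occur in the correct cyclic order exactly when $3p+4<3q$, that is, when $q\ge p+2$. This is where the hypothesis enters. Their colors are $B,A,C,B,A,C$, so they match the forced pattern in the proof of \Cref{lem:two-hex}. Hence both triangulations of $H$ are valid:
\begin{itemize}
\item the one with central triangle $\tau=\{v_N,w_p,v_{3p+4}\}$ and outer triangle $\{v_1,v_N,w_p\}$;
\item the one with central triangle $\{v_1,v_{3p+2},w_q\}$ and outer triangle $\{v_1,v_N,w_q\}$.
\end{itemize}

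\textbf{Completing the triangulation.} Outside $H$ the polygon splits along the six sides of $H$ into the following pieces:
\begin{itemize}
\item $(v_1,\dots,v_{3p})$, which has $3p$ vertices;
\item the triangle $(v_{3p},v_{3p+1},v_{3p+2})$, with colors $C,A,B$;
\item the triangle $(v_{3p+2},v_{3p+3},v_{3p+4})$, with colors $B,C,A$;
\item $(v_{3p+4},\dots,v_{3q})$, which has $3(q-p-1)\ge 3$ vertices;
\item $(v_{3q},\dots,v_N)$, which has $3(k-q+1)$ vertices.
\end{itemize}
Every piece therefore has a number of vertices divisible by $3$. Each inherits a cyclic coloring up to a rotation of color names, with consecutive boundary colors. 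The two triangles are valid, and a triangle piece of size $3$ is automatically valid.

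For the larger pieces I need nonemptiness of $\mathcal H_{3m}$ for $m\ge1$. I would record this with a quick explicit construction, since \Cref{thm:amv} asserts connectedness but perhaps not nonemptiness. Inductively, cut off the triangle $\{v_1,v_2,v_3\}$ and attach it to a valid triangulation of the remaining polygon $(v_1,v_3,v_4,\dots,v_{3m})$. That polygon has $3m-2$ vertices, however, so it is better to cut off the two consecutive ears $v_2$ and $v_3$ via the triangles $\{v_1,v_2,v_3\}$ and $\{v_1,v_3,v_4\}$. The latter has colors $A,C,A$ and is invalid, so this fails too. The cleaner route is to peel a quadrilateral-free strip, using the base-triangle decomposition itself: with base vertex $v_{3m-1}$ one gets pieces of sizes $3m-1$ and $3$, and with base vertex $v_2$ a piece of size $3m-1$. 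Thus $T(3m)\ge T(3m-1)$ and $T(3m+2)\ge T(3m)$, and a simultaneous induction from $T(3)=1$ gives nonemptiness of both families.

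\textbf{Conclusion.} Gluing any valid triangulations of the pieces to the first triangulation of $H$ yields $T_p\in\mathcal S_p$ in which $\tau$ is twistable. The local twist $\Psi$ at $\tau$ changes only the inside of $H$ and produces $T_q$ with base triangle $\{v_1,v_N,w_q\}$, so $T_q\in\mathcal S_q$, and $\mathcal S_p$ and $\mathcal S_q$ are adjacent in $Q_k$.

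The main obstacle is bookkeeping. One must confirm that the six chosen vertices lie in the right cyclic order, which is exactly $q\ge p+2$. One must also confirm that the complementary regions all have sizes admitting valid triangulations, which needs the small nonemptiness induction just described.
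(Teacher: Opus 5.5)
Your proof is correct and is essentially the paper's argument. You use the same hexagon $(v_N,v_1,w_p,v_{3p+2},v_{3p+4},w_q)$ with colors $B,A,C,B,A,C$, the same five complementary regions of orders $3p,3,3,3(q-p-1),3(k-q+1)$, and the same twist that trades the base vertex $w_p$ for $w_q$. The only deviation is that you prove the regions admit valid triangulations by the induction $T(3m)\ge T(3m-1)$, $T(3m+2)\ge T(3m)\,T(3)$, whereas the paper cites \Cref{thm:amv}. Your sentence about ``base vertex $v_{3m-1}$'' mixes up the indices: for the $(3m+2)$-gon it should be $v_{3m}$, giving pieces of orders $3m$ and $3$. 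The resulting inequalities are nevertheless right.
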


\begin{proof}
Let
\[
w_p=v_{3p}
\qquad\text{and}\qquad
w_q=v_{3q},
\]
where \(q\ge p+2\). Choose the auxiliary vertices
\[
x=v_{3p+2}
\qquad\text{and}\qquad
y=v_{3p+4}.
\]
Since \(q\ge p+2\), we have
\[
3p<3p+2<3p+4<3q.
\]
Consequently, the six vertices
\[
v_N,\ v_1,\ w_p,\ x,\ y,\ w_q
\]
occur in this cyclic order in \(P_N\).

Their colors, in the same order, are
\[
B,A,C,B,A,C.
\]
Thus, up to reversing the cyclic order of the colors, these
vertices determine a cyclically \(3\)-colored hexagon.

We now verify that the regions between consecutive selected
vertices can be completed with valid triangulations. The
nontrivial subpolygons determined by the boundary arcs have,
respectively,
\[
3p,\qquad 3,\qquad 3,\qquad
3(q-p-1),\qquad 3(k-q+1)
\]
vertices. Indeed,
\[
\begin{aligned}
|P[v_1,w_p]|&=3p,\\
|P[w_p,x]|&=3,\\
|P[x,y]|&=3,\\
|P[y,w_q]|&=3q-(3p+4)+1
            =3(q-p-1),\\
|P[w_q,v_N]|&=N-3q+1
             =3(k-q+1).
\end{aligned}
\]
All these numbers are divisible by \(3\). Moreover,
\(q\ge p+2\) guarantees that
\[
3(q-p-1)\ge3,
\]
so none of the required regions is degenerate. By Theorem~\ref{thm:amv}, each corresponding twist graph is
nonempty and connected. In particular, each region admits a
valid triangulation. Fix one such triangulation in each region.

The remaining region is the hexagon with vertices
\[
v_N,\ v_1,\ w_p,\ x,\ y,\ w_q.
\]
Its two valid local triangulations are related by a twist. In
one of them, the triangle incident with the distinguished
boundary edge \(v_Nv_1\) has third vertex \(w_p\); in the
other, it has third vertex \(w_q\). Therefore, there exist
triangulations in \(\mathcal S_p\) and \(\mathcal S_q\) that
differ by a single twist. Hence,
\[
\mathcal S_p\mathcal S_q\in E(Q_k).
\]
\end{proof}
\begin{theorem}
\label{thm:conn-3k2}
Let \(N=3k+2\), with \(N\ge14\). Then the twist graph
\(\mathcal H_N\) is connected.
\end{theorem}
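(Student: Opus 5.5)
The plan is to combine \Cref{lem:product} with \Cref{lem:interclass-twists}. Since $V(\mathcal H_N)=\bigsqcup_{i=1}^k\mathcal S_i$ and each induced subgraph $\mathcal H_N[\mathcal S_i]$ is connected by \Cref{lem:product}, every class is nonempty and internally connected. Hence $\mathcal H_N$ is connected if and only if the quotient graph $Q_k$ is connected: any edge of $Q_k$ between $\mathcal S_p$ and $\mathcal S_q$ comes from an actual twist joining some $T_p\in\mathcal S_p$ to some $T_q\in\mathcal S_q$, and internal connectivity lets us reach every vertex of both classes. So the proof reduces to showing that $Q_k$ is connected.

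The condition $N=3k+2\ge 14$ means $k\ge 4$. By \Cref{lem:interclass-twists}, $Q_k$ contains every edge $\mathcal S_p\mathcal S_q$ with $|p-q|\ge 2$, so $Q_k$ contains the complement of the path $1-2-\cdots-k$. I will check directly that this complement is connected for $k\ge4$. First, $1$ is adjacent to every $q\ge3$, and $k$ is adjacent to every $p\le k-2$. Vertex $2$ is adjacent to $k$ because $k-2\ge2$. Vertex $k-1$ is adjacent to $1$ because $k-1\ge3$. For $k=4$ the edges $1\text{-}3$, $1\text{-}4$ and $2\text{-}4$ give the spanning path $3\text{-}1\text{-}4\text{-}2$. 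For $k\ge5$, every vertex is adjacent to $1$ or to $k$, and $1$ is adjacent to $k$. Therefore $Q_k$ is connected, and so is $\mathcal H_N$.

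No serious obstacle remains, since the technical work is already done in the two lemmas. The one point to treat carefully is the lifting step. An edge of $Q_k$ only guarantees that \emph{some} pair of triangulations is joined, and it is the connectedness of each $\mathcal H_N[\mathcal S_i]$, which uses \Cref{thm:amv} through \Cref{lem:product}, that turns quotient connectivity into connectivity of $\mathcal H_N$. It is also worth noting why the bound $k\ge4$ is sharp for this argument. For $k=3$, the complement of the path on $\{1,2,3\}$ isolates $\mathcal S_2$, which is consistent with the exceptional case $\mathcal H_{11}$.
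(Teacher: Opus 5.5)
Your proposal is correct and follows the paper's proof: each class $\mathcal S_i$ is internally connected by \Cref{lem:product}, and the quotient $Q_k$ is connected because \Cref{lem:interclass-twists} supplies every edge $\mathcal S_p\mathcal S_q$ with $q\ge p+2$. The only difference is cosmetic: you attach $\mathcal S_2$ through $\mathcal S_k$, whereas the paper attaches it through $\mathcal S_4$. Also, your $k\ge5$ argument already covers $k=4$, so the separate case is not needed.
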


\begin{proof}
Since
\[
N=3k+2\ge14,
\]
we have \(k\ge4\). Thus, the base-triangle partition contains
at least the four classes
\[
\mathcal S_1,\mathcal S_2,\mathcal S_3,\mathcal S_4.
\]

By Lemma~\ref{lem:product}, each induced subgraph
\[
\mathcal H_N[\mathcal S_p],
\qquad 1\le p\le k,
\]
is connected. It therefore remains only to show that the
quotient graph \(Q_k\) is connected.

Taking \(p=1\) in
Lemma~\ref{lem:interclass-twists}, we obtain
\[
\mathcal S_1\mathcal S_q\in E(Q_k)
\qquad\text{for every }3\le q\le k,
\]
because
\[
q\ge3=1+2.
\]
Hence, every class except possibly \(\mathcal S_2\) is adjacent
to \(\mathcal S_1\).

Since \(k\ge4\), the class \(\mathcal S_4\) exists. Taking
\(p=2\) and \(q=4\), we have
\[
4=2+2,
\]
and the same lemma gives
\[
\mathcal S_2\mathcal S_4\in E(Q_k).
\]
Consequently, \(\mathcal S_2\) is connected to
\(\mathcal S_1\) through the path
\[
\mathcal S_2-\mathcal S_4-\mathcal S_1.
\]

Therefore, \(Q_k\) contains the spanning tree with edge set
\[
\bigl\{
\mathcal S_1\mathcal S_q:3\le q\le k
\bigr\}
\cup
\bigl\{
\mathcal S_2\mathcal S_4
\bigr\}.
\]
Thus, \(Q_k\) is connected. Since every class
\(\mathcal S_p\) induces a connected subgraph, it follows that
\(\mathcal H_N\) is connected.
\end{proof}

\begin{proposition}[Exceptional cases]
\label{prop:exceptions}
The twist graphs \(\mathcal H_8\) and
\(\mathcal H_{11}\) are disconnected.
\end{proposition}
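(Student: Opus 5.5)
The approach is to prove a converse to \Cref{lem:interclass-twists}: if $\mathcal S_p$ and $\mathcal S_q$ are adjacent in $Q_k$ with $p<q$, then $q\ge p+2$. Granting this, both exceptional cases follow from the partition into base-index classes. For $N=8$ we have $k=2$, so there are two classes, $\mathcal S_1$ and $\mathcal S_2$, and $Q_2$ has no edge. For $N=11$ we have $k=3$, and the only possible edge of $Q_3$ is $\mathcal S_1\mathcal S_3$, so $\mathcal S_2$ is isolated. In both cases all classes are nonempty: by \Cref{lem:product} and \Cref{thm:amv}, $\mathcal S_i\simeq\mathcal H_{3i}\square\mathcal H_{3(k-i+1)}$ and both factors are nonempty. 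Hence $Q_k$, and therefore $\mathcal H_N$, is disconnected. (For $N=8$ this is even explicit: each class is $\mathcal H_3\square\mathcal H_6\cong K_2$, so $\mathcal H_8$ has four vertices and two components.)

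For the converse, let $T\in\mathcal S_p$ and $T'\in\mathcal S_q$ differ by a local twist with supporting hexagon $H$. The base triangle $\Delta(T)$ must be one of the four triangles of $T|_H$, since otherwise it survives the twist and $\ind(T')=\ind(T)$. It cannot be the central triangle, because the central triangle is twistable and $\Delta(T)$ contains the boundary edge $v_1v_N$. Hence $\Delta(T)$ is an outer triangle, so $v_Nv_1$ is a side of $H$. Then $H$ has vertices $v_N,v_1,a,b,c,d$ in cyclic order. One valid triangulation of $H$ has central triangle $\{v_1,b,d\}$ and base triangle $\{v_N,v_1,d\}$. The other has central triangle $\{v_N,a,c\}$ and base triangle $\{v_N,v_1,a\}$. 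Therefore $\{a,d\}=\{w_p,w_q\}$, and by the cyclic order $a=w_p$ and $d=w_q$.

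Next, the validity constraints of \Cref{lem:two-hex} fix all colours on $H$. Since $c(v_N)=B$, $c(v_1)=A$ and $c(a)=C$, alternation forces $c(b)=B$, $c(c)=A$ and $c(d)=C$. The indices must strictly increase around $H$:
\[
3p< \operatorname{index}(b)< \operatorname{index}(c)< 3q,
\]
with $\operatorname{index}(b)\equiv 2$ and $\operatorname{index}(c)\equiv 1 \pmod 3$. This gives $\operatorname{index}(b)\ge 3p+2$, then $\operatorname{index}(c)\ge 3p+4$, and so $3q>3p+4$, that is, $q\ge p+2$.

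The only step needing care is the claim that $\Delta(T)$ must be an outer triangle of $H$ with $v_Nv_1$ as a hexagon side. This rests on the remark, made before \Cref{lem:product}, that the base triangle is never twistable, together with the fact that a twist changes only triangles inside $H$. After that, the argument is a short colour and index count.
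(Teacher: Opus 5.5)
Your proof is correct and follows essentially the same route as the paper: both arguments show that a base-changing twist needs a supporting hexagon $v_N, v_1, w_p, x, y, w_q$ with $c(x)=B$ and $c(y)=A$ in that order, and that such a pair cannot fit between consecutive admissible vertices. Your version is more careful than the paper's. You justify why the base triangle must be an outer triangle having $v_Nv_1$ as a hexagon side, you prove the converse of \Cref{lem:interclass-twists} for all $p<q$ by the index count, and you check that every class is nonempty, all of which the paper leaves implicit.
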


\begin{proof}
For \(N=8\), we have
\[
8=3\cdot2+2,
\]
so there are only two base-triangle classes,
\[
\mathcal S_1
\qquad\text{and}\qquad
\mathcal S_2,
\]
associated with the consecutive admissible third vertices
\[
w_1=v_3
\qquad\text{and}\qquad
w_2=v_6.
\]

A twist changing the base triangle would require a supporting
hexagon containing, in cyclic order,
\[
v_N,\ v_1,\ w_1,\ x,\ y,\ w_2,
\]
where \(x\) and \(y\) have colors \(B\) and \(A\),
respectively. However, the only vertices strictly between
\(w_1=v_3\) and \(w_2=v_6\) are
\[
v_4,\ v_5,
\]
whose colors occur in the order \(A,B\). Thus, they cannot
complete the required cyclically colored hexagon. Hence there
is no twist between \(\mathcal S_1\) and \(\mathcal S_2\), and
\(\mathcal H_8\) is disconnected, as illustrated in
\cref{fig:H8}.

For \(N=11\), we have
\[
11=3\cdot3+2,
\]
and hence three base-triangle classes
\[
\mathcal S_1,\mathcal S_2,\mathcal S_3.
\]
By Lemma~\ref{lem:interclass-twists}, there is a twist between
\(\mathcal S_1\) and \(\mathcal S_3\).

The class \(\mathcal S_2\), however, is associated with the
third vertex \(w_2\), which is consecutive to both \(w_1\) and
\(w_3\) in the ordered list of admissible third vertices. As
in the \(N=8\) case, the boundary interval between two
consecutive admissible third vertices does not contain the two
colors in the order required to complete a supporting
cyclically colored hexagon. Therefore, no base-changing twist
joins \(\mathcal S_2\) to either \(\mathcal S_1\) or
\(\mathcal S_3\).

Thus, the quotient graph has one component containing
\(\mathcal S_1\) and \(\mathcal S_3\), while
\(\mathcal S_2\) is isolated. Consequently,
\(\mathcal H_{11}\) is disconnected, as illustrated in
\cref{fig:H11}.
\end{proof}

\begin{figure}[htbp]
  \centering
  \includegraphics[width=0.4\linewidth]{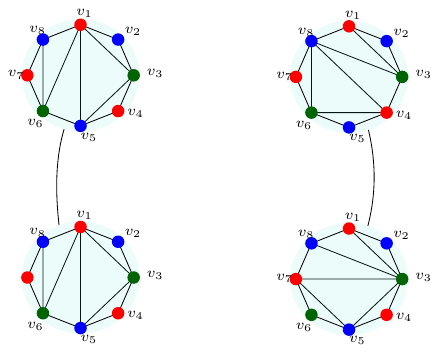}
  \caption{The twist graph \(\mathcal H_8\). The two
  base-triangle classes belong to distinct connected
  components.}
  \label{fig:H8}
\end{figure}

\begin{figure}[htbp]
  \centering
  \includegraphics[width=0.9\linewidth]{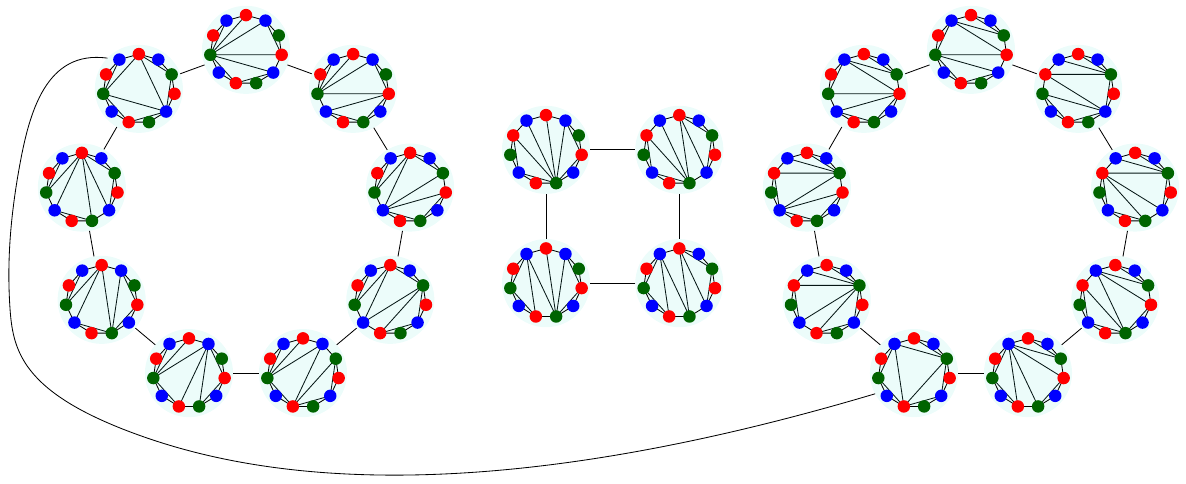}
  \caption{The twist graph \(\mathcal H_{11}\). One connected
  component contains the classes \(\mathcal S_1\) and
  \(\mathcal S_3\), while the other corresponds to
  \(\mathcal S_2\).}
  \label{fig:H11}
\end{figure}

\section{Coupled recurrences and enumeration}
\label{sec:enumeration}

Let $T(N)$ be the number of valid triangulations of $P_N$. We set $T(2)=1$ and $T(3)=1$. Define $U_k:=T(3k+2)$ for $k\ge0$ and $V_k:=T(3k)$ for $k\ge1$.

\begin{proposition}[Coupled recurrences]
\label{prop:coupled}
Let $T(N)$ denote the number of valid triangulations of the cyclically $3$-colored polygon $P_N$, with initial condition $T(3)=1$.

If $N=3k$, where $k\ge2$, then
\[
T(N) = 2T(N-1) + \sum_{i=1}^{k-2} T((N-1)-3i)\,T(3i+2).
\]

If $N=3k+2$, where $k\ge1$, then
\[
T(N) = T(N-2) + \sum_{i=1}^{k-1} T((N-2)-3i)\,T(3(i+1)).
\]
\end{proposition}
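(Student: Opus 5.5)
The plan is to count $\mathcal T_N^{(3)}$ through the disjoint union $\mathcal T_N^{(3)}=\bigsqcup_{m\ \mathrm{admissible}}\mathcal B_m$ of \Cref{def:base}. For each admissible $m$, the proof of \Cref{lem:product} gives a bijection between $\mathcal B_m$ and pairs of valid triangulations of $(v_1,\ldots,v_m)$ and $(v_m,\ldots,v_N)$. That argument never used that $N=3k+2$, only that the inherited colorings are cyclic up to a permutation of color names, so it applies verbatim here. Hence $|\mathcal B_m|=T(m)\,T(N-m+1)$.

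A degenerate side occurs when $m=2$ or $m=N-1$: the subpolygon is then a single edge. The convention $T(2)=1$ covers this, and $T(3)=1$ is the single valid triangle. The remaining work is to identify the admissible $m$ in each residue class and reindex the resulting sum.

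\emph{Case $N=3k$.} Here $c(v_1)=A$ and $c(v_N)=C$, since $3k-1\equiv 2$. So $m$ is admissible exactly when $c(v_m)=B$, that is, $m=3i+2$ with $0\le i\le k-1$. The right subpolygon then has $N-m+1=3(k-1-i)+2$ vertices, which gives
\[
T(3k)=\sum_{i=0}^{k-1}T(3i+2)\,T(3(k-1-i)+2).
\]
The two extreme terms $i=0$ and $i=k-1$ both equal $T(2)T(3k-1)=T(N-1)$, and together they give $2T(N-1)$; this needs $k\ge2$ so that the two terms are distinct. For the middle terms $1\le i\le k-2$, note $3(k-1-i)+2=(N-1)-3i$, which yields the stated formula.

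\emph{Case $N=3k+2$.} As recorded after \Cref{def:base}, the admissible vertices are $v_{3i}$ for $1\le i\le k$. By \Cref{lem:product} the sides have sizes $3i$ and $3(k-i+1)$, so
\[
T(3k+2)=\sum_{i=1}^{k}T(3i)\,T(3(k-i+1)).
\]
I separate the term $i=k$, which equals $T(3k)T(3)=T(N-2)$. In the remaining terms $1\le i\le k-1$, I substitute $i\mapsto k-i$, turning them into $T(3(k-i))\,T(3(i+1))=T((N-2)-3i)\,T(3(i+1))$. For $k=1$ the sum is empty and $T(5)=T(3)=1$, which is consistent.

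There is no real obstacle here. The only points needing care are checking that the product bijection of \Cref{lem:product} holds for $N=3k$, including the degenerate sides, and the index bookkeeping. In particular, in the second case the $i=1$ term $T(3)T(3k)$ must not be double-counted against the separated $T(N-2)$. The reindexing shows it is not: after substitution, the term $T(3)T(3k)$ sits inside the sum at $i=k-1$, while the separated term came from $i=k$.
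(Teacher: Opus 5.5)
Your proposal is correct and follows essentially the same route as the paper. Both use the root-edge decomposition at $v_1v_N$, identify the admissible base vertices $v_{3i+2}$ for $N=3k$ and $v_{3i}$ for $N=3k+2$, count each class by restriction and gluing, and then reindex. The only difference is cosmetic: for $N=3k+2$ you split off the $i=k$ term and substitute $i\mapsto k-i$, whereas the paper splits off the $i=1$ term and shifts $i\mapsto i+1$, and both produce the same sum.
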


\begin{proof}
We apply the root-edge decomposition to the fixed boundary edge $v_1v_N$. Every triangulation contains a unique triangle incident with this edge. Once its third vertex is fixed, the other two sides of this triangle separate the remaining region into two subpolygons, which may be triangulated independently.

\medskip
\noindent\textbf{The case $N=3k$.}
Assume that $k\ge2$. The endpoints $v_1$ and $v_{3k}$ have colors $A$ and $C$, respectively. Since the triangle incident with $v_1v_{3k}$ must be trichromatic, its third vertex must have color $B$. Therefore, this vertex is uniquely of the form $v_{3i+2}$ for some $0\le i\le k-1$.

Fix such an index $i$. The triangle $\{v_1,v_{3i+2},v_{3k}\}$ separates the polygon into two subpolygons. The first one has $3i+2$ vertices, while the second one has $3k-(3i+2)+1 = 3k-3i-1$ vertices.

Their inherited colorings agree, up to a cyclic permutation of the color names, with the canonical periodic $3$-colorings of polygons of the corresponding orders. Since a permutation of the color names does not affect validity, valid triangulations of the two subpolygons may be chosen independently.

When $i=0$, the third vertex is $v_2$. One side of the decomposition is degenerate, and the remaining subpolygon has $3k-1$ vertices. This case therefore contributes $T(3k-1)$. Similarly, when $i=k-1$, the third vertex is $v_{3k-1}$, and this case contributes another $T(3k-1)$.

For every interior index $1\le i\le k-2$, both subpolygons are nondegenerate and contribute $T(3i+2)\,T(3k-3i-1)$. Summing over all possible positions of the third vertex gives
\[
T(3k) = 2T(3k-1) + \sum_{i=1}^{k-2} T(3i+2)\,T(3k-3i-1).
\]
Since $3k-3i-1=3k-1-3i$, this is precisely
\[
T(3k) = 2T(3k-1) + \sum_{i=1}^{k-2} T(3k-1-3i)\,T(3i+2).
\]

\medskip
\noindent\textbf{The case $N=3k+2$.}
Now assume that $k\ge1$. The endpoints $v_1$ and $v_{3k+2}$ have colors $A$ and $B$, respectively. Hence, the third vertex of the triangle incident with the boundary edge $v_1v_{3k+2}$ must have color $C$. It is therefore uniquely of the form $v_{3i}$ for some $1\le i\le k$.

For a fixed $i$, the triangle $\{v_1,v_{3i},v_{3k+2}\}$ determines two subpolygons. The first has $3i$ vertices, and the second has $(3k+2)-3i+1 = 3(k+1-i)$ vertices.

As in the previous case, restriction to the two subpolygons and gluing along the fixed triangle give a bijection between the valid triangulations having $v_{3i}$ as the third vertex and pairs of valid triangulations of the two subpolygons. Consequently,
\[
T(3k+2) = \sum_{i=1}^{k} T(3i)\,T(3(k+1-i)).
\]

We isolate the term corresponding to $i=1$. Since $T(3)=1$, this term is $T(3)\,T(3k)=T(3k)$. For the remaining terms, replace $i$ by $i+1$. We obtain
\[
T(3k+2) = T(3k) + \sum_{i=1}^{k-1} T(3(i+1))\,T(3(k-i)).
\]
Equivalently,
\[
T(3k+2) = T(3k) + \sum_{i=1}^{k-1} T(3i+3)\,T(3k-3i),
\]
which is the second recurrence.
\end{proof}

\subsection{Closed formulas and generating functions}
\label{subsec:closed-formulas}

The recurrences of \Cref{prop:coupled} count the same objects
considered by Sagan~\cite{sagan2008}. In our notation, his closed
formulas become
\begin{equation}
\label{eq:sagan-closed-forms}
T(3k)
=
\frac{2}{3k-1}\binom{4k-3}{k-1},
\qquad
T(3k+2)
=
\frac{1}{3k+1}\binom{4k}{k},
\end{equation}
where $k\ge1$ in the first identity and $k\ge0$ in the second.

Although the closed formulas follow directly from Sagan's enumeration, the generating functions provide a compact algebraic description of the coupling between the two admissible residue classes.

Let $U(x):=\sum_{k\ge0}U_kx^k$ and $V(x):=\sum_{k\ge1}V_kx^k$.

\begin{proposition}[Functional relations]
\label{prop:functional}
The generating functions associated with the two families satisfy
\[
V(x)=xU(x)^2 \qquad\text{and}\qquad U(x)=1+\frac{V(x)^2}{x}.
\]
Consequently,
\[
U(x)=1+xU(x)^4.
\]
\end{proposition}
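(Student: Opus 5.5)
The plan is to translate the two root-edge recurrences of \Cref{prop:coupled}, in their unsimplified product forms, into generating-function identities. Both identities come from the proof of \Cref{prop:coupled}. For $N=3k$, the third vertex is $v_{3i+2}$ with $0\le i\le k-1$, which gives
\[
V_k=T(3k)=\sum_{i=0}^{k-1}T(3i+2)\,T(3(k-1-i)+2)=\sum_{i=0}^{k-1}U_iU_{k-1-i}.
\]
Here the convention $T(2)=1$ absorbs the two degenerate boundary terms $i=0$ and $i=k-1$, each contributing $T(3k-1)=U_{k-1}$; the identity $3k-3i-1=3(k-1-i)+2$ is what matches the indices. For $N=3k+2$ with $k\ge1$, we have
\[
U_k=\sum_{i=1}^{k}T(3i)\,T(3(k+1-i))=\sum_{i=1}^{k}V_iV_{k+1-i},
\]
and $U_0=T(2)=1$.

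Next I would read off the coefficients. The first identity says $[x^k]V(x)=[x^{k-1}]U(x)^2$ for every $k\ge1$, while both sides vanish at $x^0$; hence $V(x)=xU(x)^2$. For the second, note that $V(x)^2=\sum_{n\ge2}\bigl(\sum_{i+j=n,\,i,j\ge1}V_iV_j\bigr)x^n$. Its coefficient of $x^{k+1}$ is exactly $U_k$ for $k\ge1$, and $V^2$ has no $x^1$ term, so $V(x)^2/x$ is a genuine power series with constant term $0$. Adding $U_0=1$ gives $U(x)=1+V(x)^2/x$.

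Finally, substituting the first relation into the second yields
\[
U=1+\frac{x^2U^4}{x}=1+xU^4.
\]

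The only real point of care is the index bookkeeping: checking that the degenerate terms in the $3k$ case match $U_0=1$, and that the shift by $x$ is legitimate. Both follow from $V_0=0$ and $T(2)=1$. Uniqueness is not needed, since the statement only asserts that $U$ satisfies the equation.
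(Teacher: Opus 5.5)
Your proposal is correct and follows essentially the same route as the paper: both proofs write the recurrences as full convolutions $V_k=\sum_{i=0}^{k-1}U_iU_{k-1-i}$ and $U_k=\sum_{i=1}^{k}V_iV_{k+1-i}$, apply the Cauchy product, and then substitute $V=xU^2$ into $U=1+V^2/x$. The only difference is where the convolutions come from. The paper reconstructs them from the simplified recurrences of \Cref{prop:coupled}, using $U_0=1$ and $V_1=1$, and checks $V_1=U_0^2$ separately. You instead read them off the unsimplified root-edge sums with $T(2)=1$; at $k=1$ the terms $i=0$ and $i=k-1$ coincide, so there is a single term $T(2)^2=1$ rather than two copies of $U_{k-1}$, but your formula still holds.
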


\begin{proof}
Recall that
\[
U_0=1,\qquad U_k=T(3k+2)\quad (k\ge1),
\qquad\text{and}\qquad
V_k=T(3k)\quad (k\ge1).
\]

We first rewrite the recurrence for $T(3k)$. For $k\ge2$,
\[
T(3k)
=
2T(3k-1)
+
\sum_{i=1}^{k-2}
T(3k-1-3i)\,T(3i+2).
\]
Since
\[
3k-1=3(k-1)+2,
\]
we have
\[
T(3k-1)=U_{k-1}.
\]
Moreover, for every $i\in\{1,\ldots,k-2\}$,
\[
T(3i+2)=U_i
\]
and
\[
T(3k-1-3i)
=
T\bigl(3(k-1-i)+2\bigr)
=
U_{k-1-i}.
\]
Therefore,
\[
V_k
=
2U_{k-1}
+
\sum_{i=1}^{k-2}U_iU_{k-1-i}.
\]

Using the formal value $U_0=1$, the two copies of $U_{k-1}$
may be written as
\[
U_0U_{k-1}
\qquad\text{and}\qquad
U_{k-1}U_0.
\]
Hence
\begin{align*}
V_k
&=
U_0U_{k-1}
+
\sum_{i=1}^{k-2}U_iU_{k-1-i}
+
U_{k-1}U_0\\
&=
\sum_{i=0}^{k-1}U_iU_{k-1-i},
\qquad k\ge2.
\end{align*}
For $k=1$, the same identity holds because
\[
V_1=T(3)=1=U_0^2.
\]
Thus,
\[
V_k=\sum_{i=0}^{k-1}U_iU_{k-1-i},
\qquad k\ge1.
\]

We now translate this convolution into a generating-function identity.
Multiplying by $x^k$ and summing over $k\ge1$, we obtain
\begin{align*}
V(x)
&=
\sum_{k\ge1}V_kx^k\\
&=
\sum_{k\ge1}
\left(
\sum_{i=0}^{k-1}U_iU_{k-1-i}
\right)x^k.
\end{align*}
Setting $n=k-1$ gives
\begin{align*}
V(x)
&=
x\sum_{n\ge0}
\left(
\sum_{i=0}^{n}U_iU_{n-i}
\right)x^n\\
&=
xU(x)^2,
\end{align*}
where the last equality follows from the Cauchy product for formal
power series.

We next consider the recurrence for $T(3k+2)$. For $k\ge1$,
\[
T(3k+2)
=
T(3k)
+
\sum_{i=1}^{k-1}
T(3k-3i)\,T(3(i+1)).
\]
In terms of $U_k$ and $V_k$, this becomes
\[
U_k
=
V_k
+
\sum_{i=1}^{k-1}V_{k-i}V_{i+1}.
\]
Since $V_1=T(3)=1$, the first term can be written as
\[
V_k=V_1V_k.
\]
Now set $j=i+1$ in the sum. Then
\[
\sum_{i=1}^{k-1}V_{k-i}V_{i+1}
=
\sum_{j=2}^{k}V_{k+1-j}V_j.
\]
Consequently,
\begin{align*}
U_k
&=
V_1V_k
+
\sum_{j=2}^{k}V_{k+1-j}V_j\\
&=
\sum_{j=1}^{k}V_jV_{k+1-j},
\qquad k\ge1.
\end{align*}

By the Cauchy product,
\[
V(x)^2
=
\sum_{m\ge2}
\left(
\sum_{j=1}^{m-1}V_jV_{m-j}
\right)x^m.
\]
Dividing by $x$ and setting $k=m-1$, we obtain
\[
\frac{V(x)^2}{x}
=
\sum_{k\ge1}
\left(
\sum_{j=1}^{k}V_jV_{k+1-j}
\right)x^k
=
\sum_{k\ge1}U_kx^k.
\]
Since $U_0=1$, the right-hand side is $U(x)-1$. Therefore,
\[
U(x)=1+\frac{V(x)^2}{x}.
\]

Finally, substituting $V(x)=xU(x)^2$ into this identity gives
\[
U(x)
=
1+\frac{\bigl(xU(x)^2\bigr)^2}{x}
=
1+xU(x)^4.
\]
\end{proof}

Thus, the two coupled recurrences are encoded by a single algebraic series. The identity $U(x)=1+xU(x)^4$ is not needed here to derive the closed formulas, since these already follow from Sagan's enumeration. Its role is to show that the mutual dependence between the classes $3k$ and $3k+2$ collapses, at the generating-function level, to a quartic functional equation.

\begin{remark}[Relation with OEIS A369472]
The two counting sequences also occur interlaced in OEIS A369472~\cite{oeisA369472}, which enumerates achiral pentagonal polyominoes in the hyperbolic tiling $\{5,\infty\}$. More precisely, if $(a_n)_{n\ge1}$ denotes this sequence, then
\[
a_{2k-1}=T(3k) \qquad\text{and}\qquad a_{2k}=T(3k+2), \qquad k\ge1.
\]
At present, this identification follows only from the equality of the closed formulas. We do not claim a direct bijection between the valid triangulations and the corresponding polyominoes.
\end{remark}

\begin{theorem}[The excess is a Raney number]\label{thm:gap}
For every $k\ge1$, $T(3k+3)-T(3k+2) = \frac{5}{4(k-1)+5}\binom{4(k-1)+5}{k-1} = R_{4,5}(k-1)$, which is a Raney number of parameters $(4,5)$.
\end{theorem}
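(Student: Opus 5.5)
The plan is to read the identity off the generating-function relations of \Cref{prop:functional}, and then identify the resulting coefficients through Lagrange inversion applied to the quartic equation $U(x)=1+xU(x)^4$.

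\textbf{Step 1: a series for the difference.} Since $T(3k+3)=V_{k+1}$ and $T(3k+2)=U_k$, I consider
\[
D(x):=\sum_{k\ge0}\bigl(V_{k+1}-U_k\bigr)x^k .
\]
By \Cref{prop:functional}, $V(x)=xU(x)^2$. Because $V$ has no constant term, this gives $\sum_{k\ge0}V_{k+1}x^k=V(x)/x=U(x)^2$. Therefore
\[
D(x)=U(x)^2-U(x)=U(x)\bigl(U(x)-1\bigr)=xU(x)^5,
\]
where the last equality uses $U-1=xU^4$. Consequently, for every $k\ge1$,
\[
T(3k+3)-T(3k+2)=[x^{k-1}]\,U(x)^5 .
\]
As a consistency check, the case $k=0$ gives $V_1-U_0=0$, which agrees with the factor $x$ in $D(x)$.

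\textbf{Step 2: the Raney coefficients.} Write $U=1+W$. Then $W=x(1+W)^4$, so $W$ is the compositional solution associated with $\phi(w)=(1+w)^4$. By Lagrange inversion, for $n\ge1$,
\[
[x^n](1+W)^r=\frac{1}{n}[w^{n-1}]\,r(1+w)^{r-1}(1+w)^{4n}=\frac{r}{n}\binom{4n+r-1}{n-1}=\frac{r}{4n+r}\binom{4n+r}{n}.
\]
The case $n=0$ gives $1$, which the same formula also returns. This is exactly the Raney number $R_{4,r}(n)$. Taking $r=5$ and $n=k-1$ yields
\[
\frac{5}{4(k-1)+5}\binom{4(k-1)+5}{k-1}=R_{4,5}(k-1),
\]
which is the claim.

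\textbf{Main obstacle and fallback.} The only delicate point is the index bookkeeping in Step 1. One must use $V_1=T(3)=1$ and $U_0=1$ correctly, so that the shift $V(x)/x$ and the subtraction line up exactly at $k\ge1$. I would confirm this with the small case $k=1$: here $T(6)-T(5)=2-1=1=R_{4,5}(0)$.

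As an independent check, I could instead subtract the closed forms \eqref{eq:sagan-closed-forms} directly:
\[
\frac{2}{3k+2}\binom{4k+1}{k}-\frac{1}{3k+1}\binom{4k}{k}.
\]
Rewriting both terms as rational multiples of $\binom{4k+1}{k-1}$ should reduce the difference to $\frac{5}{4k+1}\binom{4k+1}{k-1}$. However, the generating-function route avoids this binomial algebra, so I would present it as the main proof.
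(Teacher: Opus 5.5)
Your proof is correct, but your main argument is not the route the paper takes. The paper subtracts Sagan's closed forms \eqref{eq:sagan-closed-forms} directly. It rewrites the difference as $\frac{5k}{(3k+2)(3k+1)}\binom{4k}{k}$ and then as $\frac{5}{4k+1}\binom{4k+1}{k-1}$, which is exactly the computation you list only as a fallback check.

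Your main proof instead works entirely with the paper's own functional equations. From $V/x=U^2$ and $U-1=xU^4$ you get $\sum_{k\ge0}(V_{k+1}-U_k)x^k=U^2-U=xU^5$. Lagrange inversion for $W=x(1+W)^4$ then gives $[x^n]U^r=R_{4,r}(n)$. Your bookkeeping is correct: $V_1=U_0=1$, the $k=0$ term vanishes, and the $n=0$ case is covered.

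The paper's version is shorter, but it rests on Sagan's externally proved enumeration. Yours needs only \Cref{prop:coupled}, \Cref{prop:functional}, and standard Lagrange inversion. As a byproduct it re-derives Sagan's formulas themselves, as $U_k=R_{4,1}(k)$ and $V_{k+1}=R_{4,2}(k)$. It also explains why a Raney number with parameters $(4,5)$ appears at all: the gap series is $x$ times the fifth power of the quaternary-tree series. Finally, it gives the quartic equation $U=1+xU^4$ a genuine role, whereas the paper remarks that this equation is not needed for its closed formulas.
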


\begin{proof}
By Sagan's formulas in \eqref{eq:sagan-closed-forms},
\begin{align*}
T(3k+3)-T(3k+2) &= \frac{2}{3k+2}\binom{4k+1}{k} - \frac{1}{3k+1}\binom{4k}{k} \\
&= \left(\frac{2(4k+1)}{(3k+2)(3k+1)} - \frac{1}{3k+1}\right)\binom{4k}{k} \\
&= \frac{5k}{(3k+2)(3k+1)}\binom{4k}{k}.
\end{align*}
Using $\binom{4k+1}{k-1} = \frac{k(4k+1)}{(3k+2)(3k+1)}\binom{4k}{k}$, we obtain
\[
T(3k+3)-T(3k+2) = \frac{5}{4k+1}\binom{4k+1}{k-1} = \frac{5}{4(k-1)+5}\binom{4(k-1)+5}{k-1}.
\]
The last expression is $R_{4,5}(k-1)$ by the definition $R_{p,r}(n)=\frac{r}{pn+r}\binom{pn+r}{n}$.
\end{proof}

\section{Cyclic colorings with \texorpdfstring{$j\ge4$}{j >= 4}}
\label{sec:more-colors}

Let $P_n=(v_1,\ldots,v_n)$ be a convex polygon whose vertices are cyclically colored with $j\ge4$ colors. Thus, the color of $v_q$ is determined by the residue class of $q$ modulo $j$.

A triangulation is called \emph{valid} if the three vertices of each of its triangles have pairwise distinct colors. Let $t(n)$ denote the number of valid triangulations of $P_n$. As usual, we use the formal convention $t(2)=1$ for a degenerate subpolygon arising at an extreme position of the base triangle.

\begin{lemma}[General root-edge recurrence]
\label{lem:recurrence-general}
Let $n=jk+r$, $k\ge1$, $0\le r\le j-1$, $r\ne1$.

If $2\le r\le j-1$, then
\[
t(n) = \sum_{i=0}^{k} \sum_{\ell=1}^{r-2} t(n-\ell-ji)\, t(\ell+1+ji) + \sum_{i=0}^{k-1} \sum_{\ell=r}^{j-1} t(n-\ell-ji)\, t(\ell+1+ji). \tag{\(\star\)}\label{eq:recurrence-general-r}
\]

If $r=0$, then
\[
t(jk) = \sum_{i=0}^{k-1} \sum_{\ell=1}^{j-2} t(jk-\ell-ji)\, t(\ell+1+ji). \tag{\(\star\star\)}\label{eq:recurrence-general-zero}
\]
\end{lemma}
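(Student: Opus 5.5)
We apply the root-edge decomposition to the boundary edge $v_1v_n$, exactly as in \Cref{prop:coupled}. By \Cref{def:base}, $\mathcal T_n^{(j)}=\bigsqcup_{m}\mathcal B_m$, the union running over the admissible base vertices $v_m$. For a fixed admissible $m$, restricting to the subpolygons $(v_1,\ldots,v_m)$ and $(v_m,\ldots,v_n)$ and gluing along $\{v_1,v_m,v_n\}$ gives a bijection between $\mathcal B_m$ and pairs of valid triangulations. The argument is the same as in the proof of \Cref{lem:product}. The first subpolygon has $m$ vertices and the second has $n-m+1$ vertices. Both inherit a cyclic $j$-coloring up to a rotation of the color names, which does not affect validity. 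For $m=2$ or $m=n-1$ one side is degenerate and contributes the factor $t(2)=1$. Hence
\[
t(n)=\sum_{m\ \mathrm{admissible}} t(m)\,t(n-m+1).
\]
The remaining work is to identify the admissible $m$ and reindex.

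\textbf{Identifying the admissible indices.} We have $c(v_1)=0$ and $c(v_n)=(n-1)\bmod j=r-1$ when $r\ge1$, and $c(v_n)=j-1$ when $r=0$. We write $m=\ell+1+ji$ with $0\le \ell\le j-1$, so that $c(v_m)=\ell$. The condition $c(v_m)\neq 0$ gives $\ell\ge1$. The condition $c(v_m)\ne c(v_n)$ excludes $\ell=r-1$ when $r\ge2$, and excludes $\ell=j-1$ when $r=0$. Substituting, the summand becomes $t(\ell+1+ji)\,t(n-\ell-ji)$, matching the summands in \eqref{eq:recurrence-general-r} and \eqref{eq:recurrence-general-zero}.

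\textbf{Determining the range of $i$.} We need $2\le m\le n-1$. Since $\ell\ge1$, the lower bound $m\ge2$ holds automatically. The upper bound reads $\ell+1+ji\le jk+r-1$, that is, $ji+\ell\le jk+r-2$.
\begin{itemize}
\item If $r\ge2$ and $\ell\le r-2$, then every $i\in\{0,\ldots,k\}$ is allowed.
\item If $r\ge2$ and $\ell\ge r$ (with $\ell\le j-1$), then $i=k$ would force $\ell\le r-2$, which is false. Conversely, $i\le k-1$ gives $ji+\ell\le j(k-1)+j-1=jk-1\le jk+r-2$, since $r\ge1$. So exactly $i\in\{0,\ldots,k-1\}$ is allowed.
\item If $r=0$, then $\ell$ ranges over $1,\ldots,j-2$ and the condition is $ji+\ell\le jk-2$. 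This holds exactly for $i\le k-1$, since $j(k-1)+j-2=jk-2$.
\end{itemize}
These ranges yield \eqref{eq:recurrence-general-r} and \eqref{eq:recurrence-general-zero}. When $r=2$ the first double sum is empty, which is consistent with the formula.

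The main obstacle is purely bookkeeping: the boundary cases of the index ranges. In particular, one must check that the convention $t(2)=1$ correctly covers $m=2$ (namely $\ell=1$, $i=0$) and $m=n-1$, and that no term with $m=n$ or $m=1$ slips in.
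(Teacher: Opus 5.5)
Your proposal is correct and takes essentially the same route as the paper. Both arguments use the root-edge decomposition at $v_1v_n$ and the product count $t(m)\,t(n-m+1)$, then substitute $m=\ell+1+ji$ with forbidden residues $\ell=0$ and $\ell\equiv r-1 \pmod j$, arriving at the same ranges for $i$. Your explicit check of the bound $ji+\ell\le jk+r-2$ makes precise the paper's brief remark that, for $\ell\ge r$, the last incomplete period contains no further admissible occurrence.
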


\begin{proof}
Fix the boundary edge $v_1v_n$. Every valid triangulation of $P_n$ contains a unique triangle incident with this edge. Write this base triangle as $\{v_1,v_m,v_n\}$, $2\le m\le n-1$.

Once $v_m$ is fixed, the other two sides of the base triangle divide $P_n$ into two subpolygons: $(v_1,v_2,\ldots,v_m)$ and $(v_m,v_{m+1},\ldots,v_n)$. These subpolygons have, respectively, $m$ and $n-m+1$ vertices.

Restriction and gluing give a bijection between the valid triangulations having $\{v_1,v_m,v_n\}$ as their base triangle and pairs of valid triangulations of these two subpolygons. Therefore, the contribution of the base vertex $v_m$ is $t(m)t(n-m+1)$.

It remains to determine the admissible values of $m$. Write $m=\ell+1+ji$, $0\le\ell\le j-1$. The residue $\ell$ records the position of $v_m$ within a period of the cyclic coloring.

Since $m\equiv\ell+1\pmod j$, the vertices $v_m$ and $v_1$ have the same color if and only if $\ell=0$. Moreover, because $n\equiv r\pmod j$, the vertices $v_m$ and $v_n$ have the same color if and only if $\ell\equiv r-1\pmod j$. Thus, the two forbidden residues are $\ell=0$ and $\ell\equiv r-1\pmod j$.

Suppose first that $2\le r\le j-1$. The admissible residues are then $\ell\in \{1,\ldots,r-2\} \cup \{r,\ldots,j-1\}$.

For $1\le\ell\le r-2$, the inequality $2\le \ell+1+ji\le n-1$ allows $0\le i\le k$. For $r\le\ell\le j-1$, the last incomplete period contains no further occurrence, and hence $0\le i\le k-1$.

For a given pair $(i,\ell)$, we have $m=\ell+1+ji$ and $n-m+1 = n-(\ell+1+ji)+1 = n-\ell-ji$. Consequently, the corresponding contribution is $t(\ell+1+ji)\,t(n-\ell-ji)$. Summing over all admissible pairs $(i,\ell)$ gives \eqref{eq:recurrence-general-r}.

Now suppose that $r=0$. In this case, $v_n$ has the color corresponding to residue $0$ modulo $j$. The forbidden residues for $\ell$ are therefore $\ell=0$ and $\ell=j-1$. Thus, $1\le\ell\le j-2$. For every such $\ell$, the possible values of $i$ are $0\le i\le k-1$. The same restriction-and-gluing argument then gives \eqref{eq:recurrence-general-zero}.
\end{proof}

\begin{remark}[Relation with Sagan's enumerative framework]
\label{rem:sagan-general}
Sagan~\cite{sagan2008} studied proper \(k\)-partitions of cyclically colored polygons. Under his original definition, every \(k\)-gon must contain all \(c\) colors among its vertices. He observed that the enumeration already becomes difficult outside the cases admitting generalized Catalan formulas. For example, when \(c=3\) and \(k=4\), the resulting recurrences do not appear to yield simple generating functions.

Sagan also considered the case \(c>k\), for which the original definition cannot apply, and suggested instead requiring every \(k\)-gon to contain \(k\) distinct vertex colors. He reported that the case \(c=4\) and \(k=3\) leads to similar algebraic difficulties. Our validity condition agrees with this proposed definition only in the triangulation specialization \(k=3\), where \(c=j\).

Consequently, \Cref{lem:recurrence-general} should not be regarded as a solution of Sagan's broader enumeration problem for arbitrary proper \(k\)-partitions. It provides instead a uniform recursive description of the triangulation case for every fixed number \(j\) of colors. The following \(j=4\) specialization illustrates how this recurrence can be converted into a finite algebraic system, while also showing why the dependence on the residue classes modulo \(j\) becomes increasingly complicated as \(j\) grows.
\end{remark}

\begin{remark}[The case $j=4$]
For every fixed $j$, separating the counting sequence according to the residue classes modulo $j$ transforms the recurrence of \Cref{lem:recurrence-general} into a finite system of algebraic functional equations. For $j=3$, this system reduces to the quartic equation obtained in \Cref{prop:functional}.

We describe the analogous reduction for $j=4$. Define the shifted sequence
\[
a_n:=t(n+2),\qquad n\ge0,
\]
so that $a_n$ counts the valid triangulations of a polygon with $n+2$ vertices. Since no valid triangulation exists when the number of vertices is congruent to $1$ modulo $4$, we have
\[
a_{4n+3}=t(4n+5)=0,\qquad n\ge0.
\]

Let
\[
A(x):=\sum_{n\ge0}a_nx^n
\]
be the ordinary generating function of the complete counting sequence. We separate this sequence into its nonzero residue classes by defining
\[
A_r(x):=\sum_{n\ge0}a_{4n+r}x^n,
\qquad r\in\{0,1,2\}.
\]
Thus,
\[
A(x) = A_0(x^4)+xA_1(x^4)+x^2A_2(x^4).
\]

Applying the recurrence of \Cref{lem:recurrence-general} separately to the three admissible residue classes gives
\begin{align*}
a_{4n} &= 2\sum_{\substack{p,q\ge0\\p+q=n-1}} a_{4p+1}a_{4q+2}, && n\ge1,\\
a_{4n+1} &= \sum_{\substack{p,q\ge0\\p+q=n}} a_{4p}a_{4q} + \sum_{\substack{p,q\ge0\\p+q=n-1}} a_{4p+2}a_{4q+2}, && n\ge0,\\
a_{4n+2} &= 2\sum_{\substack{p,q\ge0\\p+q=n}} a_{4p}a_{4q+1}, && n\ge0.
\end{align*}
As usual, a sum over an empty index set is interpreted as zero.

Since $a_0=t(2)=1$, $a_1=t(3)=1$, and $a_2=t(4)=2$, the corresponding generating functions satisfy
\begin{align}
A_0(x)&=1+2xA_1(x)A_2(x),\label{eq:j4-A0}\\
A_1(x)&=A_0(x)^2+xA_2(x)^2,\label{eq:j4-A1}\\
A_2(x)&=2A_0(x)A_1(x).\label{eq:j4-A2}
\end{align}

We now reduce this coupled system to a single equation. Substituting \eqref{eq:j4-A2} into \eqref{eq:j4-A0} gives
\[
A_0-1 = 2xA_1(2A_0A_1) = 4xA_0A_1^2.
\]
Consequently,
\[
4xA_1^2=\frac{A_0-1}{A_0}.
\]
Using this identity and \eqref{eq:j4-A2} in \eqref{eq:j4-A1}, we obtain
\begin{align*}
A_1 &= A_0^2+x(2A_0A_1)^2 \\
&= A_0^2\bigl(1+4xA_1^2\bigr) \\
&= A_0^2 \left( 1+\frac{A_0-1}{A_0} \right) \\
&= A_0(2A_0-1).
\end{align*}

Define $Z(x):=2\bigl(A_0(x)-1\bigr)$. Equivalently,
\[
A_0(x)=1+\frac{Z(x)}{2} = \frac{2+Z(x)}{2}.
\]
It follows that $2A_0(x)-1=1+Z(x)$, and therefore
\[
A_1(x) = \frac{(1+Z(x))(2+Z(x))}{2}.
\]
Moreover, by \eqref{eq:j4-A2},
\[
A_2(x) = \frac{(1+Z(x))(2+Z(x))^2}{2}.
\]

Finally, multiplying \eqref{eq:j4-A0} by $2$ gives $Z(x)=4xA_1(x)A_2(x)$. Substituting the expressions for $A_1(x)$ and $A_2(x)$ yields
\[
\boxed{
Z(x)=x(1+Z(x))^2(2+Z(x))^3.
}
\]

Thus, the complete generating function can be recovered from $Z$ as
\[
A(x) = 1+\frac{Z(x^4)}{2} + \frac{x}{2}\bigl(1+Z(x^4)\bigr)\bigl(2+Z(x^4)\bigr) + \frac{x^2}{2}\bigl(1+Z(x^4)\bigr)\bigl(2+Z(x^4)\bigr)^2.
\]

The equation for $Z(x)$ is in the standard form $Z=x\Phi(Z)$, where $\Phi(u)=(1+u)^2(2+u)^3$. Hence, by the Lagrange--B\"urmann inversion formula, for every $n\ge1$,
\[
[x^n]Z(x) = \frac{1}{n} [u^{n-1}] (1+u)^{2n}(2+u)^{3n}.
\]
Together with the expressions above for $A_0$, $A_1$, and $A_2$, this gives explicit coefficient formulas for each admissible residue class.

For larger values of $j$, the same decomposition produces increasingly large coupled systems. We do not know whether these systems admit a comparable reduction to a single functional equation or a uniform closed formula valid for every $j$. The separation into residue classes appears to be intrinsic, since the admissible base vertices depend on the order of the polygon modulo $j$.
\end{remark}

\begin{definition}[Flip reconfiguration graph]
\label{def:flip-reconfiguration-graph}
Let $j\ge4$ and $N\ge3$. The \emph{flip reconfiguration graph} for cyclically $j$-colored triangulations of an $N$-vertex convex polygon, denoted by $\mathcal G_N^{(j)}$, is the graph whose vertices are the valid triangulations of $P_N$. Two vertices of $\mathcal G_N^{(j)}$ are adjacent if and only if the corresponding triangulations differ by a single validity-preserving flip.
\end{definition}

\begin{theorem}[Connectedness for cyclic colorings with $j\ge4$]
\label{thm:jge4}
Let $N=jk+r$, $j\ge4$, $k\ge1$, $0\le r\le j-1$, $r\ne1$. Then the reconfiguration graph $\mathcal G_N^{(j)}$ is connected.
\end{theorem}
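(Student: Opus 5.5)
Our plan is strong induction on $N$, using the same base-triangle decomposition as in \Cref{sec:base-decomposition}, now with validity-preserving flips in place of twists. For the induction we prove a slightly stronger statement. Let $P$ be a convex polygon whose consecutive vertices carry colors forming a contiguous block of the cyclic $j$-coloring, and whose two endpoints have distinct colors. Then the graph of valid triangulations of $P$ under validity-preserving flips is connected (possibly empty or a single vertex). Every subpolygon $(v_1,\dots,v_m)$ or $(v_m,\dots,v_N)$ cut off by a base triangle is of this type. Up to a cyclic relabelling of colors, such a subpolygon is exactly a $P_M$ with $M\not\equiv1\pmod j$, so the stronger statement is the theorem itself applied to smaller orders. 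Small orders ($M\le 3$, and the degenerate $t(2)=1$) are trivial.

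\textbf{Within a class.} Fix the edge $v_1v_N$ and partition the valid triangulations into the classes $\mathcal B_m$ of \Cref{def:base}. The argument of \Cref{lem:product} transfers verbatim. A flip that keeps the base triangle fixed is supported in one of the two subpolygons, and conversely. Hence
\[
\mathcal G_N^{(j)}[\mathcal B_m]\simeq \mathcal G_m^{(j)}\square\mathcal G_{N-m+1}^{(j)},
\]
up to color relabelling. By induction each factor is connected and nonempty, since the endpoint colors of each factor differ. So each class induces a connected subgraph.

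\textbf{Between classes.} It remains to show that the quotient graph on admissible indices $m$ is connected. We aim to show that consecutive admissible indices $m<m'$ are adjacent in the quotient. There are two cases.
\begin{itemize}
\item If $m'=m+1$, take the quadrilateral $v_1v_mv_{m+1}v_N$. The triangle $\{v_1,v_m,v_N\}$ together with the triangle $\{v_m,v_{m+1},v_N\}$ can be flipped to $\{v_1,v_{m+1},v_N\}$ together with $\{v_1,v_m,v_{m+1}\}$. All four triangles are valid provided $c(v_m)\neq c(v_{m+1})$, which always holds, and the endpoint conditions hold by admissibility of $m$ and $m'$. The complementary regions $(v_1..v_m)$ and $(v_{m+1}..v_N)$ have distinct endpoint colors, so they admit valid triangulations by induction and nonemptiness.
\item If $m'=m+2$, the skipped vertex $v_{m+1}$ has the color of $v_1$ or of $v_N$. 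Since $j\ge4$, we would instead pass through an intermediate configuration. Say $c(v_{m+1})=c(v_N)$. Use the quadrilateral $v_1,v_m,v_{m+2},v_N$ with diagonal $v_mv_{m+2}$, which is valid because $c(v_m)\ne c(v_{m+2})$ (they differ by $2<j$). Check that $\{v_m,v_{m+2},v_N\}$ and $\{v_1,v_m,v_{m+2}\}$ are trichromatic. The second is; the first needs $c(v_m)\ne c(v_N)$ and $c(v_{m+2})\ne c(v_N)$, which is admissibility. The region $(v_m,v_{m+1},v_{m+2})$ is a single valid triangle. Then flipping $v_1v_m\!\to\!v_mv_{m+2}$ is not quite right; we instead flip the diagonal $v_mv_N$ to $v_1v_{m+2}$ inside the quadrilateral $v_1v_mv_{m+2}v_N$. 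This is a single flip between $\mathcal B_m$ and $\mathcal B_{m+2}$.
\end{itemize}
Gaps between consecutive admissible indices are at most $2$, because the forbidden residues $0$ and $r-1$ are distinct and non-adjacent except for wraparound, and when they are adjacent the gap is $3$. We would handle that case with a length-$3$ gap using the same quadrilateral trick with the triangle fan on $v_m,\dots,v_{m+3}$, checking validity via $j\ge4$.

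\textbf{Main obstacle.} We expect the main obstacle to be this gap analysis. We must verify carefully, for each pattern of forbidden residues (in particular $r=0$ and $r=2$, where the two forbidden residues are adjacent), that a single validity-preserving flip across the gap exists. We also need the intermediate small region to have a valid triangulation compatible with the chosen diagonals. If a direct flip fails for gap $3$, we would fall back on showing adjacency to a non-consecutive class, as in \Cref{lem:interclass-twists}, using $j\ge4$ to guarantee enough color variety.
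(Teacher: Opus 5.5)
Your proposal is correct and follows the paper's proof: strong induction over orders $N\not\equiv 1 \pmod j$, the Cartesian-product structure of each base class $\mathcal B_m$, and a single flip in the quadrilateral $(v_1,v_m,v_{m'},v_N)$ joining consecutive admissible classes. The gap-$3$ case you flag as the main obstacle needs no fallback. Only two residues are forbidden, so consecutive admissible indices satisfy $m'-m\le 3<j$, which gives $c(v_m)\neq c(v_{m'})$, and the middle region $(v_m,\dots,v_{m'})$ has at most four consecutive, hence distinct, colors; this is exactly how the paper handles all gaps at once.
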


\begin{proof}
Fix $j\ge4$. We prove, by strong induction on $N$, the slightly stronger statement that the reconfiguration graph is connected for every $N\ge2$ such that $N\not\equiv1\pmod j$.

We regard the degenerate polygon $P_2$ as having a unique empty triangulation, so its reconfiguration graph is the one-vertex graph. When $3\le N\le j$, all vertices of $P_N$ have distinct colors. Hence every triangulation is valid, and $\mathcal G_N^{(j)}$ is the classical flip graph of a convex polygon. In particular, it is connected. When $5\le N\le j$, Lucas~\cite{lucas1987} proved the
stronger existence of a Hamilton cycle.

Now suppose that $N>j$ and that the statement holds for every smaller polygon order not congruent to $1$ modulo $j$. Fix the boundary edge $v_1v_N$. Since $N\not\equiv1\pmod j$, its endpoints have distinct colors: $\ccl(v_1)\neq\ccl(v_N)$.

By \Cref{def:base}, an index $m\in\{2,\ldots,N-1\}$ is admissible precisely when $\ccl(v_m)\notin\{\ccl(v_1),\ccl(v_N)\}$. By periodicity, this is equivalent to $m\not\equiv1\pmod j$ and $m\not\equiv N\pmod j$.

Let $m_1<m_2<\cdots<m_s$ be all the admissible indices. The corresponding base classes $\mathcal B_{m_1},\mathcal B_{m_2},\ldots,\mathcal B_{m_s}$ form a partition of $V(\mathcal G_N^{(j)})$.

We first show that each class $\mathcal B_{m_q}$ induces a connected subgraph. Fix $q\in\{1,\ldots,s\}$. The base triangle $\Delta_q:=\{v_1,v_{m_q},v_N\}$ divides $P_N$ into the two subpolygons $P_q^-=(v_1,v_2,\ldots,v_{m_q})$ and $P_q^+=(v_{m_q},v_{m_q+1},\ldots,v_N)$, whose orders are $|V(P_q^-)|=m_q$ and $|V(P_q^+)|=N-m_q+1$.

The first order is not congruent to $1$ modulo $j$, since $m_q\not\equiv1\pmod j$. The second is also not congruent to $1$ modulo $j$, since $N-m_q+1\equiv1\pmod j$ would imply $m_q\equiv N\pmod j$, contradicting the admissibility of $m_q$.

Restriction and gluing therefore give
\[
\mathcal G_N^{(j)}[\mathcal B_{m_q}] \simeq \mathcal G_{m_q}^{(j)} \square \mathcal G_{N-m_q+1}^{(j)},
\]
where a factor corresponding to a degenerate $2$-gon is understood to be the one-vertex graph. The inherited colorings agree with the canonical cyclic coloring up to a permutation of the color names.

By the induction hypothesis, both nondegenerate factors are connected. Hence $\mathcal G_N^{(j)}[\mathcal B_{m_q}]$ is connected.

It remains to connect consecutive base classes. Let $q\in\{1,\ldots,s-1\}$. The admissible indices are precisely those whose residues modulo $j$ avoid the two residues corresponding to the colors of $v_1$ and $v_N$. Since $j\ge4$, at least $j-2\ge2$ residue classes remain. Therefore, consecutive admissible indices $m_q$ and $m_{q+1}$ have distinct residues modulo $j$, and hence $\ccl(v_{m_q})\neq\ccl(v_{m_{q+1}})$.

Since the colors of $v_1$ and $v_N$ are distinct and both $v_{m_q}$ and $v_{m_{q+1}}$ have colors different from them, the four vertices $v_1, v_{m_q}, v_{m_{q+1}}, v_N$ have pairwise distinct colors. In their cyclic order, they determine the convex quadrilateral $Q_q=(v_1,v_{m_q},v_{m_{q+1}},v_N)$.

The boundary of $Q_q$ separates the remainder of $P_N$ into the three subpolygons $R_q^{(1)}=(v_1,v_2,\ldots,v_{m_q})$, $R_q^{(2)}=(v_{m_q},v_{m_q+1},\ldots,v_{m_{q+1}})$, and $R_q^{(3)}=(v_{m_{q+1}},v_{m_{q+1}+1},\ldots,v_N)$. Their orders are, respectively, $m_q$, $m_{q+1}-m_q+1$, and $N-m_{q+1}+1$.

None of these orders is congruent to $1$ modulo $j$. Indeed, this follows respectively from $m_q\not\equiv1\pmod j$, $m_{q+1}\not\equiv m_q\pmod j$, and $m_{q+1}\not\equiv N\pmod j$. Thus, by the induction hypothesis, every nondegenerate region $R_q^{(1)},R_q^{(2)},R_q^{(3)}$ admits a valid triangulation. Fix one such triangulation in each region.

Inside $Q_q$, first choose the diagonal $v_{m_q}v_N$. The two resulting triangles are $\{v_1,v_{m_q},v_N\}$ and $\{v_{m_q},v_{m_{q+1}},v_N\}$. They are valid because the four vertices of $Q_q$ have pairwise distinct colors. Together with the fixed triangulations of the three complementary regions, they determine a valid triangulation $T_q\in\mathcal B_{m_q}$.

Now flip $v_{m_q}v_N$ to the other diagonal of $Q_q$, namely $v_1v_{m_{q+1}}$. The two new triangles are $\{v_1,v_{m_q},v_{m_{q+1}}\}$ and $\{v_1,v_{m_{q+1}},v_N\}$, which are also valid. The resulting triangulation, denoted by $T'_q$, has base triangle $\{v_1,v_{m_{q+1}},v_N\}$. Therefore, $T'_q\in\mathcal B_{m_{q+1}}$. Consequently, the flip inside $Q_q$ produces an edge between $\mathcal B_{m_q}$ and $\mathcal B_{m_{q+1}}$.

We have proved that every class $\mathcal B_{m_q}$ induces a connected subgraph and that every pair of consecutive classes is joined by an edge. Hence the classes form the connected chain
\[
\mathcal B_{m_1} \longleftrightarrow \mathcal B_{m_2} \longleftrightarrow \cdots \longleftrightarrow \mathcal B_{m_s}.
\]
Since these classes partition $V(\mathcal G_N^{(j)})$, the graph $\mathcal G_N^{(j)}$ is connected.
\end{proof}



\newpage

\begin{thebibliography}{99}

\bibitem{segner1761}
J.~A.~Segner,
\emph{Enumeratio modorum quibus figurae planae rectilineae per diagonales dividuntur in triangula},
Novi Comment. Acad. Sci. Imp. Petropol. 7
(1758/59; published 1761), 203--210.

\bibitem{AcharyaMutzeVerciani2025}
R.~Acharya, T.~M\"utze, and F.~Verciani,
Flips in colorful triangulations,
\emph{J. Comput. Geom.} 16(1) (2025), 295--332,
\url{https://doi.org/10.20382/jocg.v16i1a9}.

\bibitem{devadoss1999}
S.~L.~Devadoss,
Tessellations of moduli spaces and the mosaic operad,
\emph{Contemp. Math.} 239 (1999), 91--114,
\url{https://doi.org/10.1090/conm/239/03599}.

\bibitem{hurtado1999}
F.~Hurtado, M.~Noy, and J.~Urrutia,
Flipping edges in triangulations,
\emph{Discrete Comput. Geom.} 22 (1999), 333--346,
\url{https://doi.org/10.1007/PL00009464}.

\bibitem{ItoEtAl2022}
T.~Ito, Y.~Iwamasa, Y.~Kobayashi, S.-i.~Maezawa,
Y.~Nozaki, Y.~Okamoto, and K.~Ozeki,
Reconfiguration of colorings in triangulations of the sphere,
\emph{J. Comput. Geom.} 16(1) (2025), 253--294,
\url{https://doi.org/10.20382/jocg.v16i1a8}.

\bibitem{lawson1972}
C.~L.~Lawson,
Transforming triangulations,
\emph{Discrete Math.} 3(4) (1972), 365--372,
\url{https://doi.org/10.1016/0012-365X(72)90093-3}.

\bibitem{lee1989}
C.~W.~Lee,
The associahedron and triangulations of the $n$-gon,
\emph{European J. Combin.} 10(6) (1989), 551--560,
\url{https://doi.org/10.1016/S0195-6698(89)80072-1}.

\bibitem{lucas1987}
J.~M.~Lucas,
The rotation graph of binary trees is Hamiltonian,
\emph{J. Algorithms} 8 (1987), 503--535,
\url{https://doi.org/10.1016/0196-6774(87)90048-4}.

\bibitem{mutze2023gray}
T.~M\"utze,
Combinatorial Gray codes---an updated survey,
\emph{Electron. J. Combin.} DS26 (2023), 99~pp.,
\url{https://doi.org/10.37236/11023}.

\bibitem{nishimura2018}
N.~Nishimura,
Introduction to reconfiguration,
\emph{Algorithms} 11(4) (2018), Article~52,
\url{https://doi.org/10.3390/a11040052}.

\bibitem{pournin2014}
L.~Pournin,
The diameter of associahedra,
\emph{Adv. Math.} 259 (2014), 13--42,
\url{https://doi.org/10.1016/j.aim.2014.02.035}.

\bibitem{sagan2008}
B.~E.~Sagan,
Proper partitions of a polygon and $k$-Catalan numbers,
\emph{Ars Combin.} 88 (2008), 109--124,
arXiv:math/0407280.

\bibitem{sleator1988}
D.~D.~Sleator, R.~E.~Tarjan, and W.~P.~Thurston,
Rotation distance, triangulations, and hyperbolic geometry,
\emph{J. Amer. Math. Soc.} 1(3) (1988), 647--681,
\url{https://doi.org/10.1090/S0894-0347-1988-0928904-4}.

\bibitem{stanley1999}
R.~P.~Stanley,
\emph{Enumerative Combinatorics}, Vol.~2,
Cambridge University Press, Cambridge, 1999,
\url{https://doi.org/10.1017/CBO9780511609589}.

\bibitem{stasheff1963}
J.~D.~Stasheff,
Homotopy associativity of H-spaces. I, II,
\emph{Trans. Amer. Math. Soc.} 108 (1963), 275--312,
\url{https://doi.org/10.1090/S0002-9947-1963-0158400-5}.

\bibitem{vandenheuvel2013}
J.~van den Heuvel,
The complexity of change,
in \emph{Surveys in Combinatorics 2013},
Cambridge University Press, Cambridge, 2013, 127--160,
\url{https://doi.org/10.1017/CBO9781139506748.005}.

\bibitem{oeisA369472}
OEIS Foundation Inc.,
\emph{The On-Line Encyclopedia of Integer Sequences},
Sequence A369472,
\url{https://oeis.org/A369472}.

\end{thebibliography}
\end{document}